\documentclass{article}
\usepackage{authblk,color,amsfonts,amsmath,mathabx}
\usepackage[a4paper, margin=0.6in]{geometry}

\usepackage{bm}
\usepackage{amsthm}
\usepackage[format=plain,labelfont=bf,up,textfont=it,up]{caption}
\usepackage{graphicx,subfigure,epstopdf,wrapfig,chemarrow}
\usepackage{tabularx}
\usepackage[ruled,linesnumbered]{algorithm2e}
\SetKwRepeat{Do}{do}{while}
\usepackage{url}

\usepackage{setspace}
\usepackage{lineno}
\newtheorem{theorem}{Theorem}

\newtheorem{lemma}{Lemma}
\newtheorem{proposition}{Proposition}
\newtheorem{example}{Example}

\usepackage{tikz,array}
\usetikzlibrary{shapes.geometric}
\usetikzlibrary{shapes.arrows}


\begin{document}

\title{A convergence analysis of a structure-preserving gradient flow 
method for the all--electron Kohn-Sham model}

\author[1]{Yedan Shen} \affil[1]{School of Mathematics and Information Science, Guangzhou University, Guangzhou, China.}

\author[2]{Ting Wang} \affil[2]{Faculty of Science and Technology, University of Macau, Macao SAR, China.}

\author[3,4]{Jie Zhou} \affil[3]{Hunan Key Laboratory for Computation and Simulation in Science and Engineering, Xiangtan University, Xiangtan, China.} \affil[4]{School of Mathematical Computational Sciences, Xiangtan University, Xiangtan, China.}

\author[2,5,6,\footnote{Corresponding author. Email: {\it
      garyhu@um.edu.mo}}]{Guanghui Hu} \affil[2]{Faculty of Science and Technology, University of Macau, Macao SAR, China.}\affil[5]{Zhuhai UM Science \& Technology Research Institute, Zhuhai, China.} \affil[6]{Guangdong-Hong Kong-Macao Joint Laboratory for Data-Driven Fluid Mechanics and Engineering Applications, University of Macau, Macao SAR, China.}

\date{}

\maketitle

\begin{abstract}
  In [Dai et al, Multi. Model. Simul., 2020], a structure-preserving
  gradient flow method was proposed for the ground state calculation
  in Kohn--Sham density functional theory, based on which a linearized
  method was developed in [Hu, et al, EAJAM, accepted] for further
  improving the numerical efficiency.  In this paper, a complete
  convergence analysis is delivered for such a linearized method for
  the all-electron Kohn--Sham model. Temporally, the convergence, the
  asymptotic stability, as well as the structure-preserving property
  of the linearized numerical scheme in the method is discussed
  following previous works, while spatially, the convergence of the
  $h$-adaptive mesh method is demonstrated following [Chen et al,
  Multi. Model. Simul., 2014], with a key study on the boundedness of
  the Kohn--Sham potential for the all-electron Kohn--Sham
  model. Numerical examples confirm the theoretical results very well.

\end{abstract}

{\bf  {Key words:\ Kohn--Sham density functional theory; Gradient flow model;
  Structure--preserving; Linear scheme; Convergence analysis;}\rm}



\section{Introduction}

The Kohn--Sham density functional theory (KSDFT) proposed in 1965 is
one of the most successful approximation models towards the
computational quantum chemistry, condensed matter physics, etc., for
many-body electronic structure calculations
\cite{fiolhais2003primer,zhang2017adaptive}. Due to the nonlinearity
of the governing equation and the complexity of the given electronic
structure system, obtaining a high-quality numerical solution has been
becoming an important issue in the simulation.

For the ground state calculation, concerned with the nonlinearity of
the Kohn--Sham equation caused by the Hamiltonian operator, the
\emph{self consistent field} (SCF) iteration is a classical choice
\cite{lin2013elliptic,kuang2020stabilizing}. Several types of
numerical methods are employed in the simulation of Kohn--Sham
equation, such as finite element methods
\cite{bao2015real,chen2014adaptive,dai2020gradient}, finite difference
methods \cite{tancogne2020octopus}, spectral methods
\cite{motamarri2014subquadratic,guo2021mortar}, discontinuous Galerkin
methods \cite{lin2012adaptive}, finite volume methods
\cite{dai2011finite}, plane--wave methods
\cite{genovese2011daubechies}, etc. Besides directly solving the
Kohn--Sham equation, minimizing the total energy is also a
popular way to obtain the ground state of the quantum system. The
ground state of the system can be obtained by solving the following
minimization model with orthogonality constraints
\cite{payne1992iterative},

\begin{equation}
  \left \{
  \begin{array}{l}
  	\displaystyle \min_{U} E_{KS}(U), \\
  	 \displaystyle  \\
	 \displaystyle \langle U, U \rangle=I_p.
 \end{array}
  \right .
\end{equation}

There have been several types of optimization methods proposed for the
Kohn--Sham energy minimization model such as the quasi-Newton methods
\cite{pfrommer1997relaxation}, the constrained minimization methods
\cite{dai2021parallel,wu2005direct}, the conjugate gradient methods
\cite{dai2017conjugate,payne1992iterative}. A main advantage of this
approach is that the solution of nonlinear eigenvalue problems can be
avoided, based on which the main cost becomes the assembling of the
total energy functional and operations on the manifold. It should be
pointed out that the orthonormalization has to be invoked explicitly
or implicitly by using most of the existing optimization methods.
However, implementing such orthogonality-preserving strategies would
occupied a large part of the CPU time, which makes it a tough task for
the ground state calculation of a large scale quantum system. Thus,
concerned with the efficiency and the parallel scalability, an
orthogonalization-free model is needed towards the simulation of large
scale system. It is worth mentioning that in
\cite{gao2022orthogonalization}, an infeasible approach has been
proposed based on the finite element method for calculating the ground
state, which successfully removed the orthogonalization operation in
the simulation.

Towards the same purpose, another approximation to avoid the
orthogonalization is the gradient flow method. Recently, in
\cite{dai2020gradient}, Dai, etc., have introduced and analyzed a
Kohn--Sham gradient flow based model for electronic structure
calculations, in which an extended gradient flow has been proposed. It
has shown that the orthonormalization relation among those
wavefunctions can be automatically preserved during the simulation on
a Stiefel manifold. Concerned with the numerical scheme in
\cite{dai2020gradient}, an implicit midpoint scheme has been employed
for temporal discretization, based on which the effectiveness of the
algorithm has been demonstrated successfully. Motivated by
\cite{dai2020gradient}, a linear scheme which is designed to improve
the computational efficiency is proposed in \cite{hu2022}, in which
several adaptive strategies also haven been adopted to accelerate the
simulations in order to handle the nonlinearity of the governing
equation and the singularity generated by the external potential. It
is noted that, although structure-preserving properties and attractive
numerical performance of the proposed gradient flow methods in
\cite{hu2022} have been successfully
demonstrated, a theoretical study of the numerical convergence of the
class of the methods is still missing.

In this paper, concerned with the above issue, first and foremost, a
complete analysis for the algorithm in \cite{hu2022} is delivered, of
which the convergence has been verified both temporally and spatially
in an unified framework. Spatially, motivated by
\cite{chen2014adaptive}, the convergence of the adaptive finite
element approximation (AFE) of the gradient flow model is demonstrated
in this paper. By contrast, the convergence analysis is demonstrated
for the all-electron Kohn--Sham gradient flow model in this paper, in
which the boundedness of the Kohn--Sham total energy
\cite{shen2019asymptotics} plays an important role.  Temporally,
following \cite{dai2020gradient}, the convergence of the proposed
numerical scheme has been verified theoretically, in which the
asymptotically stability is shown to be valid for our linearized
structure-preserving numerical scheme as well.  Finally, the
convergence of the proposed numerical method is demonstrated by some
numerical examples, which validate our theoretical results.
Furthermore, towards improving the efficiency of the simulation, the
adaptive finite element methods \cite{chen2021decoupled}
could be employed to efficiently achieve the ground state of the
Kohn--Sham system as well.  The adaption methods in the electronic
structure calculations mainly include the r-adaptive methods
\cite{bao2013numerical}, $h$-adaptive methods \cite{chen2014adaptive,
  shen2019asymptotics}, hp-adaptive methods \cite{torsti2006three}.
In this paper, the $h$-adaptive finite element method has been
adopted, which would be discussed in details in the following
contents.

The rest of this paper is organized as follows: In Section 2, the
KSDFT model and some necessary notations are introduced. In Section 3,
the Kohn--Sham gradient flow model with a fully discretization are
described in details, and an algorithm with adaptive strategies is
proposed.  In Section 4, the convergence of our linearized numerical
scheme for the gradient flow model is verified both temporally and
spatially.  In Section 5, some numerical experiments are presented to
verify the convergence of our method. The conclusion is given in
Section 6 finally.

\section{Preliminaries}

\subsection{Some notations}

For convenience, some notations used in the following discussions will
be introduced firstly in this subsection.
The standard $L^2$-inner product $(\cdot,\cdot)$ is applied 
in this paper, which is denoted by
\begin{equation}
	(u,v)=\int_{\Omega} u(x)v(x)dx,
\end{equation}
with associated $L^2$ norm defined as
$\lVert u \rVert=(u,u)^{\frac{1}{2}}$, additionally, the $H^1-$norm is
defined as follows
\begin{equation}
	\lVert u \rVert_1=\sqrt{\lVert u \rVert^2+\lVert \nabla u \rVert^2},
\end{equation}
where $H^1(\Omega)$ is the standard Sobolev space, and 
$H_0^1(\Omega)=\{u\in H^1(\Omega): u|_{\partial \Omega}=0\}$.

By means of the above notations, we define $\mathcal{H}=(H_0^1(\Omega))^N$, 
and $U=(u_1,u_2,\cdots,u_N)$, 
$\Phi=(\phi_1,\phi_2,\cdots, \phi_N) \in \mathcal{H}$. 
Then the inner product matrix takes the following expression,
\begin{equation}
	\langle U^T \phi \rangle=(u_i, \phi_j)_{i,j=1}^N 
	\in \mathbb{R}^{N \times N},
\end{equation}
where the norm of $U$ is defined by
\begin{equation}
	\lvert \lvert \lvert U \rvert \rvert \rvert=(U,U)^{\frac{1}{2}},
\end{equation}
where $(U,U)=tr \langle U^T \Psi\rangle$, and the notation $tr$
means the trace of a matrix.

Finally, we introduce the Stiefel manifold , which takes the 
following form
\begin{equation}
	\mathcal{M}^N=\{U \in \mathcal{H}: \langle U^T U \rangle=I_N \},
\end{equation}
where $I_N$ is an identity matrix of size $N$. Moreover, as mentioned
in \cite{dai2020gradient}, For $U \in (H^1(\mathbb{R}^3))^N$ and 
any matrix $P \in \mathbb{R}^{N \times N}$, there holds
\begin{equation}
	UP= \Big( \sum_{j=1}^N p_{j1}u_j,\sum_{j=1}^N p_{j2}u_j,
	\cdots,\sum_{j=1}^N p_{jN}u_j\Big),
\end{equation}
where
\begin{equation}
	U \in \mathcal{M}^N \Leftrightarrow UP \in \mathcal{M}^N,
	~~ \forall ~P \in \mathcal{O}^N,
\end{equation}
with $\mathcal{O}^N=\{P \in \mathbb{R}^{N \times N}: P^TP=I_N\}$.

Meanwhile, an equivalent relation $"\sim"$ is defined on 
$\mathcal{M}^N$ as
\begin{equation}
	U \sim \hat{U} \Leftrightarrow \exists ~ P \in \mathcal{O}^N,
	~\hat{U}=UP,
\end{equation}
and get a Grassmann manifold, which is a quotient of $\mathcal{M}^N$:
\begin{equation}
	G^N=\mathcal{M}^N / \sim.
\end{equation}
Furthermore, the equivalent class of $U \in \mathcal{M}^N$ is denoted
as follows,
\begin{equation}
	[U]=\{UP: P \in \mathcal{O}^N\}.
\end{equation}

\subsection{KSDFT model}
Aiming at obtaining the ground state of a given molecular system in 
all-electron calculation, the Kohn--Sham density functional theory (KSDFT)
model has been introduced to achieve this goal, which can be stated 
as: Find $(\lambda_i,u_i) \in \mathbb{R}^- \times H_0^1(\Omega)$ , 
$i=1,\cdots,N $, such that
\begin{equation}\label{kse}
	\left \{
	\begin{array}{l}
	\displaystyle \hat{H}u_i=\lambda_iu_i,\\
	\displaystyle \\
	\displaystyle (u_i,u_j)=\delta_{ij},\\
	\end{array}
	\right .	
\end{equation}
where $\hat{H}$ is the Hamiltonian operator consisting of two parts, the
kinetic potential part $-\nabla^2/2$ and the effective potential part
$V_{eff}([\rho];\bm{r})$ which takes the following expression,
\begin{equation}
	V_{eff}([\rho];\bm{r})=V_{ext}(\bm{r})+V_{Har}([\rho];\bm{r})
	+V_{xc}([\rho];\bm{r}),
\end{equation}
where $\rho$ is the electronic density.

The total energy of the given KSDFT system consists of several parts:
\begin{equation}
	E_{KS}=E_{kinetic}+E_{ext}+E_{Har}+E_{xc}+E_{nuc},
\end{equation}
where $E_{kinetic}$ is the kinetic energy, and $E_{ext}$, $E_{Har}$, $E_{xc}$,
and $E_{nuc}$ denote the external energy, Hartree energy, exchange-correlation
energy, and the nucleus-nucleus energy, respectively, of which the formulation
are given as follows,
\begin{align}
	E_{kinetic} & =\frac{1}{2}\sum_{l=1}^p \int_{\mathbb{R}^3} \lvert \nabla u_l\rvert^2 d\bm{r}, &  E_{ext} & = \int_{\mathbb{R}^3} V_{ext} \rho(\bm{r}) d \bm{r}, & E_{Har} & =\frac{1}{2}\int_{\mathbb{R}^3} 
	V_{Har}\rho(\bm{r}) d \bm{r}, \nonumber \\ \nonumber
	E_{xc} & = \int_{\mathbb{R}^3} \epsilon_{xc}\rho(\bm{r}) d \bm{r}, &
	E_{nuc} & =\sum_{j=1}^M \sum_{k=j+1}^M \frac{Z_j Z_k}{\lvert \bm{R}_j-\bm{R}_k\rvert}.
\end{align}

Other than solving the solving the Kohn--Sham equation, 
the ground state solution can be obtained by minimizing the total energy
with orthogonality constraints \cite{payne1992iterative}:
\begin{eqnarray}
	&& \min_{U} E(U) \\ \nonumber
	&& \mbox{s.t.} \langle U,U\rangle=I_p. 
\end{eqnarray}

Towards the minimization of the Kohn--Sham total energy, a class of 
optimization methods can be applied. One of them is the 
gradient type method, 
which are employed in \cite{dai2017conjugate,zhang2014gradient}, 
etc., with several orthogonalization technologies.
However, the orthogonalization process invoked in these methods may
cause high complexity and low parallel scalability when solving large
quantum systems. To breakthrough this bottleneck, an orthogonality
preserving scheme defined on the Stiefel manifold has been proposed
in \cite{dai2020gradient}, which solves the following minimization 
problem,
\begin{equation}\label{minipro}
	\inf_{U \in \mathcal{M}^N} E(U),
\end{equation}
in which an orthonormalization free method is described and the
excellent results are obtained.

Motivated by their idea, a linearized orthogonal-preserving numerical 
scheme for the Kohn--Sham gradient flow model would be employed in 
this paper, of which the convergence will be demonstrated both 
temporally and spatially in a unified scheme in the following contents.

\section{The gradient flow based KSDFT model}
\subsection{Gradient flow model for ground state calculation}

To avoid the orthogonalization operation in the 
algorithm, several types of methods has been employed towards obtaining 
the ground state in KSDFT, one of them is the gradient flow model 
proposed in \cite{dai2020gradient}, which is described as follows:

The idea of gradient flow method is to find a function $U(t)$, such that the
critical point $U^*$ for the minimization problem can be reached when
$t \rightarrow \infty$. Therefore, the curve $U=U(t)$ should satisfy the
following initial value problem,
\begin{equation}
\left \{
\begin{array}{l}
	\displaystyle \frac{dU}{dt}=-\nabla E(U),~~ 0 < t <\infty, \\
	\displaystyle \\
	\displaystyle U(0)=U_0(x),
\end{array}	
\right .
\end{equation}
with the constraint $U(t) \in \mathcal{M}^N$. The detailed process of gradient
flow is : starting from the initial value $U_0$, the solution goes along with
some curve $U=U(t) \in \mathcal{M}^N$ and finally, the critical point $U^*$
can be reached. Naturally, it is required that the value of the functional
decreases most fastly along the curve $U=U(t)$.

It should be mentioned that a non-trivial challenge 
for solving the above problem is to guarantee that the 
solution $U(t)$ always keeps on the $\mathcal{M}^N$ during the 
whole simulation. Fortunately, the extended
gradient flow model proposed in \cite{dai2020gradient} has been 
introduced to resolve this issue.

Firstly, the standard gradient of $E(U)$ is expressed as
\begin{equation}
	\nabla E(U)=(E_{u_1},E_{u_2},\cdots,E_{u_N}) \in (H^{-1}(\Omega))^N,
\end{equation}
where $E_{u_i} \in H^{-1}(\Omega)$ is defined by
\begin{equation}
	E_{u_i}=\frac{\delta E(U)}{\delta u_i}.
\end{equation}
Define the gradient flow on the Grassmann manifold $G^N$ of $E(U)$ \cite{edelman1998geometry},
\begin{equation}
	\tilde{\nabla}_G E(U)=\nabla E(U)-U \langle \nabla E(U),U \rangle^T,
	~~ \forall ~U \in \mathcal{M}^N.
\end{equation}
To preserve the orthonormality property, the domain of $\tilde{\nabla}_G E(U)$
from $\mathcal{M}^N$ to $(H_0^1(\Omega))^N$ need to be extended, i.e.,
\begin{equation}
	\nabla_G E(U)= \nabla E(U) \langle U^T U \rangle-U \langle \nabla E(U),U \rangle^T,~~ \forall ~ U \in (H_0^1(\Omega))^N.
\end{equation}
Based on the extended gradient $\nabla_G$, the gradient flow model based on
DFT can be written as
\begin{equation}\label{GF}
\left \{
\begin{array}{l}
	\displaystyle \frac{dU}{dt}=-\nabla_G E(U),~~ 0 < t <\infty, \\
	\displaystyle \\
	\displaystyle U(0)=U_0(x),
\end{array}	
\right .
\end{equation}
where the initial condition $U_0(x) \in \mathcal{M}^N$.
Meanwhile, the inner product of $U$ takes the following expression
\begin{equation}
	(U,\hat{U})=tr(\langle U^T \hat{U}\rangle),
\end{equation}
with the associated norm
\begin{equation}
	\lvert \lvert \lvert  U \rvert \rvert \rvert=(U,U)^{1/2},~~~~~~
	\lvert \lvert \lvert  U \rvert \rvert \rvert \in (H_0^1(\Omega))^N.
\end{equation}
The set of ground state for the Kohn--Sham gradient 
flow system are defined as
\begin{equation}
	\Theta=\Big\{ \Phi\in \mathcal{M}^N:
	E(\Phi)=\min_{U \in \mathcal{M}^N}E(U) ~\mbox{and} ~ \Phi~ \mbox{solves} ~(\ref{GF})  \Big\},
\end{equation}

\subsection{The fully discretized scheme}

In this subsection, the temporal discretization of the gradient flow model
is introduced firstly, including some properties of the temporal scheme,
and the discription of the spatial discretization is given following.
\subsubsection{Temporal discretization}

Following the model proposed in \cite{dai2020gradient}, a fully discretized
scheme will be given in this section, in which a linear numerical scheme
will be adopted for the temporal discretization. it
is worth mentioning that with this linear solver, the properties such as
orthonormality preserving among the wave functions and the decay of the
total energy hold well.

First of all, the gradient flow model is given as follows:
\begin{equation}
\left \{
\begin{array}{l}
	\displaystyle \frac{dU}{dt}=-\nabla_G E(U),~~ 0 < t <\infty, \\
	\displaystyle \\
	\displaystyle U(0)=U_0(x),
\end{array}	
\right .
\end{equation}
where the initial condition $U_0(x) \in \mathcal{M}^N$. The linear
temporal scheme given in \cite{hu2022} is proposed as follows.

Let $\{t_n: n=1,2,3,\cdots\} \subset [0,\infty)$ be discrete points
such that
\begin{equation}
	0=t_0 < t_1 < t_2 < \cdots < t_n < \cdots.
\end{equation}
Denote
\begin{equation}
	U^{n+\frac{1}{2}}=\frac{U^n+U^{n+1}}{2},
\end{equation}
then the linear temporal scheme is expressed as
\begin{equation}\label{linear:sch}
	\frac{U^{n+1}-U^n}{\Delta t_n}=-\mathcal{A}_{U^n}U^{n+\frac{1}{2}},
\end{equation}
where $\mathcal{A}_{U^n}$ is given by $\mathcal{A}_{U^n}=\{\nabla E(U^n),U^n\}
:=\nabla E(U^n)(U^n)^T-U^n \nabla E(U^n)^T$.

It has been verified in \cite{hu2022} that the orthonomalization among
the wavefunctions can be preserved well under this linear scheme, which is
guaranteed by the following proposition.

\begin{proposition}
	If $U^0 \in \mathcal{M}^N$, and $U^n$ is produced from (\ref{linear:sch})
then we obtain $U^n \in \mathcal{M}^N$.
\end{proposition}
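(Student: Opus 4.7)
The plan is to prove $\langle (U^n)^T U^n\rangle = I_N$ by induction on $n$, with the base case given by hypothesis. Assuming the claim at step $n$, I need to show that one step of the linearized scheme preserves the Gram matrix, i.e., $\langle (U^{n+1})^T U^{n+1}\rangle = \langle (U^n)^T U^n\rangle$. The mechanism will be that $\mathcal{A}_{U^n}$ is \emph{skew} in a sense compatible with the midpoint evaluation on the right-hand side of (\ref{linear:sch}).

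First I would rewrite the scheme as $U^{n+1} - U^n = -\Delta t_n\, \mathcal{A}_{U^n} U^{n+\frac{1}{2}}$ and take the matrix-valued inner product with $U^{n+\frac{1}{2}} = \tfrac{1}{2}(U^n + U^{n+1})$ on both sides, producing
\begin{equation*}
\bigl\langle (U^{n+\frac{1}{2}})^T (U^{n+1}-U^n)\bigr\rangle = -\Delta t_n \bigl\langle (U^{n+\frac{1}{2}})^T \mathcal{A}_{U^n} U^{n+\frac{1}{2}}\bigr\rangle.
\end{equation*}
Expanding the left-hand side,
\begin{equation*}
\bigl\langle (U^{n+\frac{1}{2}})^T (U^{n+1}-U^n)\bigr\rangle = \tfrac{1}{2}\bigl[\langle (U^{n+1})^T U^{n+1}\rangle - \langle (U^n)^T U^n\rangle\bigr] + \tfrac{1}{2}\bigl[\langle (U^n)^T U^{n+1}\rangle - \langle (U^{n+1})^T U^n\rangle\bigr],
\end{equation*}
so that its symmetric part is exactly the quantity $\tfrac{1}{2}[\langle (U^{n+1})^T U^{n+1}\rangle - \langle (U^n)^T U^n\rangle]$ I want to control. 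The strategy is thus to take the symmetric part of the whole identity and show that the right-hand side is skew.

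The key algebraic step, which I expect to be the heart of the argument, is to verify that for every $V\in\mathcal{H}$ the matrix $\langle V^T \mathcal{A}_{U^n} V\rangle$ is skew-symmetric. Using the definition $\mathcal{A}_{U^n} V = \nabla E(U^n)\langle (U^n)^T V\rangle - U^n\langle \nabla E(U^n)^T V\rangle$ together with the rule that constant matrices factor out of the bracket $\langle\cdot\rangle$, and setting $A := \langle V^T \nabla E(U^n)\rangle$ and $B := \langle V^T U^n\rangle$ (so that $\langle (U^n)^T V\rangle = B^T$ and $\langle \nabla E(U^n)^T V\rangle = A^T$), one obtains
\begin{equation*}
\bigl\langle V^T \mathcal{A}_{U^n} V\bigr\rangle = AB^T - BA^T,
\end{equation*}
which is manifestly skew-symmetric. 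Applying this with $V = U^{n+\frac{1}{2}}$ shows that the symmetric part of the right-hand side of the identity vanishes.

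Combining the two observations yields $\langle (U^{n+1})^T U^{n+1}\rangle = \langle (U^n)^T U^n\rangle$, and induction using $\langle (U^0)^T U^0\rangle = I_N$ delivers $U^n\in\mathcal{M}^N$ for all $n$. The only delicate point I anticipate is bookkeeping of transposes when manipulating the matrix-valued inner products $\langle\cdot\rangle$ and justifying the pull-out of constant matrices; once the commutator-like form $AB^T - BA^T$ is exposed, skew-symmetry and hence the conclusion follow immediately.
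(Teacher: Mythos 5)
Your argument is correct. Note that the paper itself does not prove this proposition: it is stated as a known result and deferred to the reference [Hu, Wang, Zhou, EAJAM], so there is no in-paper proof to compare against line by line. Your proof is the standard (and surely the intended) one for midpoint-type discretizations of isospectral/orthogonality-preserving flows: test the update equation with $U^{n+\frac{1}{2}}$, observe that the symmetric part of $\langle (U^{n+\frac12})^T(U^{n+1}-U^n)\rangle$ is exactly $\tfrac12[\langle (U^{n+1})^TU^{n+1}\rangle-\langle (U^n)^TU^n\rangle]$ because $\langle (U^n)^TU^{n+1}\rangle-\langle (U^{n+1})^TU^n\rangle$ is skew, and show that the right-hand side contributes nothing symmetric because $\langle V^T\mathcal{A}_{U^n}V\rangle=AB^T-BA^T$ is skew-symmetric for every $V$. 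Your bookkeeping of transposes is consistent with the paper's convention $\langle U^T\Phi\rangle_{ij}=(u_i,\phi_j)$, and the identity $\langle V^T(WM)\rangle=\langle V^TW\rangle M$ for a constant matrix $M$ is exactly the pull-out rule you need. The only point worth adding for completeness is that the proposition tacitly assumes $U^{n+1}$ exists, i.e., that the linear operator $I+\tfrac{\Delta t_n}{2}\mathcal{A}_{U^n}$ is invertible; this follows from the $L^2$-skew-adjointness of $\mathcal{A}_{U^n}$ (the same computation you perform, traced), since then $\|(I+\tfrac{\Delta t_n}{2}\mathcal{A}_{U^n})V\|^2=\|V\|^2+\tfrac{\Delta t_n^2}{4}\|\mathcal{A}_{U^n}V\|^2\geq\|V\|^2$. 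With that one sentence added, your proof is complete and self-contained, which is more than the paper offers for this statement.
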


\begin{proposition}\label{monotonic}
	If $U^0 \in \mathcal{M}^N$, and $U^n$ is obtained from (\ref{linear:sch}).
Given $\Delta t_n$ is extremely	small, and assume $\nabla E(U)$ is Lipschitz
continuous in the $\mathcal{M}^N$, i.e.,
\begin{equation}
	\lvert \lvert \lvert \nabla E(U)-\nabla E(\Phi)\rvert \rvert \rvert
	\leq L \lvert \lvert \lvert U-\Phi \rvert \rvert \rvert,
	~~\forall ~ U,\Phi \in \mathcal{M}^N,
\end{equation}
then the sequence $\{U^n\}$ satisfies
\begin{equation}
	E(U^{n+1}) \leq E(U^n).
\end{equation}
\end{proposition}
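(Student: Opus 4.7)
The plan is to estimate $E(U^{n+1})-E(U^n)$ via a descent-type inequality that uses the Lipschitz continuity of $\nabla E$, and then read off the sign of the leading-order variation from the scheme (\ref{linear:sch}). Since $U^n,U^{n+1}\in\mathcal{M}^N$ by the previous proposition, the standard quadratic upper bound for a Lipschitz gradient yields
\begin{equation*}
E(U^{n+1})-E(U^n) \le (\nabla E(U^n),\,U^{n+1}-U^n) + \frac{L}{2}\lvert\lvert\lvert U^{n+1}-U^n\rvert\rvert\rvert^2 .
\end{equation*}
Substituting $U^{n+1}-U^n=-\Delta t_n\mathcal{A}_{U^n}U^{n+\frac{1}{2}}$ and expanding $U^{n+\frac{1}{2}}=U^n+\tfrac{1}{2}(U^{n+1}-U^n)$ splits the first-order contribution as
\begin{equation*}
(\nabla E(U^n),\,U^{n+1}-U^n) = -\Delta t_n(\nabla E(U^n),\,\mathcal{A}_{U^n}U^n) - \frac{\Delta t_n}{2}(\nabla E(U^n),\,\mathcal{A}_{U^n}(U^{n+1}-U^n)).
\end{equation*}

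Next I would isolate the dominant term. Set $B:=\langle \nabla E(U^n)^T U^n\rangle$. The orthonormality constraint $\langle (U^n)^T U^n\rangle = I_N$ (guaranteed by the previous proposition) reduces $\mathcal{A}_{U^n}U^n$ to $\nabla E(U^n)-U^nB$, and a direct calculation using cyclicity of the trace gives
\begin{equation*}
(\nabla E(U^n),\,\mathcal{A}_{U^n}U^n) = \lvert\lvert\lvert\nabla E(U^n)\rvert\rvert\rvert^2 - \mathrm{tr}(B^2) \ge 0.
\end{equation*}
The nonnegativity follows by decomposing $B$ into its symmetric and antisymmetric parts (whence $\mathrm{tr}(B^2)\le\mathrm{tr}(B^TB)$) together with $\lvert\lvert\lvert\nabla E(U^n)\rvert\rvert\rvert^2\ge\mathrm{tr}(B^TB)$, since $U^nB^T$ is the orthogonal projection of $\nabla E(U^n)$ onto the span of $U^n$. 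This quantity plays the role of the discrete dissipation rate and vanishes precisely at Kohn--Sham critical points.

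For the remainder, invoking the scheme once more gives $\lvert\lvert\lvert U^{n+1}-U^n\rvert\rvert\rvert\le\Delta t_n\lvert\lvert\lvert\mathcal{A}_{U^n}U^{n+\frac{1}{2}}\rvert\rvert\rvert$, and the Stiefel constraint together with a uniform a priori bound on $\lvert\lvert\lvert\nabla E(U^n)\rvert\rvert\rvert$ (available from the boundedness of the Kohn--Sham energy along the trajectory) bounds $\lvert\lvert\lvert\mathcal{A}_{U^n}U^{n+\frac{1}{2}}\rvert\rvert\rvert$ by a constant $C$ independent of $\Delta t_n$. Hence both the Lipschitz term and the cross term are of order $(\Delta t_n)^2$, so
\begin{equation*}
E(U^{n+1})-E(U^n) \le -\Delta t_n(\nabla E(U^n),\,\mathcal{A}_{U^n}U^n) + C(\Delta t_n)^2,
\end{equation*}
and taking $\Delta t_n$ small enough makes the right-hand side nonpositive. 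The main obstacle I anticipate is this last step: the cross term still depends implicitly on $U^{n+1}$, so one must either solve the linear scheme explicitly to rewrite $U^{n+1}-U^n$ in terms of $U^n$ alone, or run a short bootstrap using the Stiefel preservation of the previous proposition to obtain a genuinely $\Delta t_n$-uniform bound on $\lvert\lvert\lvert\mathcal{A}_{U^n}U^{n+\frac{1}{2}}\rvert\rvert\rvert$; without this, the smallness threshold on $\Delta t_n$ would itself depend on the unknown $U^{n+1}$ and become circular.
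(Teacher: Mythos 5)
Note first that the paper never actually proves this proposition: it is imported from \cite{hu2022}, and the closest in-paper analogue is the expansion of $E(U^{n+1})-E(U^n)$ carried out inside the proof of the temporal convergence theorem. Measured against that computation, your skeleton is the same --- extract the first-order term $-\Delta t_n\,\mathrm{tr}\langle \nabla E(U^n)^T\mathcal{A}_{U^n}U^n\rangle$, show it is nonpositive, and control the rest as $O(\Delta t_n^2)$ --- but two steps are done differently, and in both your version is the more explicit one. Where the paper writes $E(U^{n+1})-E(U^n)=\mathrm{tr}\langle\nabla E(g(\xi))^T(U^{n+1}-U^n)\rangle$ by the mean value theorem at an intermediate point and only then uses Lipschitz continuity to replace $g(\xi)$ by $U^n$, you invoke the descent-lemma quadratic bound directly; the two are equivalent in strength, and both quietly require $\nabla E$ to be Lipschitz on the chord $[U^n,U^{n+1}]$, which leaves $\mathcal{M}^N$ --- a hypothesis mismatch you share with the paper. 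More substantially, your sign argument $\mathrm{tr}(B^2)\le\mathrm{tr}(B^TB)\le\lvert\lvert\lvert\nabla E(U^n)\rvert\rvert\rvert^2$ via the symmetric/antisymmetric split and the projection interpretation of $U^nB^T$ is correct and is nowhere spelled out in the paper; it is a genuine addition.

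The one step you flag but leave as a fork --- a $\Delta t_n$-uniform bound on $\lvert\lvert\lvert\mathcal{A}_{U^n}U^{n+\frac{1}{2}}\rvert\rvert\rvert$ not circularly involving $U^{n+1}$ --- is closed exactly by your first alternative: solve the scheme, $U^{n+\frac{1}{2}}=(I+\frac{\Delta t_n}{2}\mathcal{A}_{U^n})^{-1}U^n$, and use the skew-symmetry of $\mathcal{A}_{U^n}$ to get $\lvert\lvert\lvert\mathcal{A}_{U^n}U^{n+\frac{1}{2}}\rvert\rvert\rvert\le\lvert\lvert\lvert\mathcal{A}_{U^n}U^n\rvert\rvert\rvert$; this is precisely the resolvent manipulation the paper performs, and you should execute it rather than leave it optional. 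Be aware also (as is true of the paper's version) that the resulting threshold on $\Delta t_n$ is not uniform in $n$: the cross term scales like $\Delta t_n^2\lvert\lvert\lvert\mathcal{A}_{U^n}U^n\rvert\rvert\rvert$ to the first power, so near a critical point one needs $\Delta t_n\lesssim\lvert\lvert\lvert\mathcal{A}_{U^n}U^n\rvert\rvert\rvert$; the statement's loose ``extremely small'' absorbs this, but it deserves to be made explicit.
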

This proposition is related to the decay behavior of the total energy,
which is also an important property to guarantee the convergence of the
solution towards the ground state of a given system.

\subsubsection{Spatial discretization}
In order to propose a fully discretized scheme, the classical finite
element method is employed fort the spatial discretization, which is
introduced in this subsection.

To derive the finite element discretization for the gradient flow system,
we firstly introduce the following notations for description. First of all,
$\Omega \subset \mathbb{R}^3$ is used to denote the computational domain
, and $\partial \Omega $ is its boundary. For this domain $\Omega$, we
have a tetrahedron mesh $\mathcal{T}$ which completely covers the domain
$\Omega$. The mesh $\mathcal{T}$ consists of a set of nonoverlapped
tetrahedron elements, i.e., $\mathcal{T}=\{ T_k \}_{k=1}^{N_{tet}}$,
where $N_{tet}$ is the total number of the tetrahedron elements in the
mesh $\mathcal{T}$. Then following the definition introduced by Ciarlet,
on tetrahedron elements $T_k, k=1,\cdots, N_{tet}$, we define the finite
element $(T_k,P_r,\mathbb{N})$, where $P_r$ is the set of all polynomial
with degree no larger than r in three variables, and $\mathcal{N}$ is the
set of nodal variables. With the above notations,
the standard Sobolev space in a given bounded domain
$\Omega \subset \mathbb{R}^3$ is defined by $H^1:=W_2^1(\Omega)$, 
and the standard Lagrange finite element space takes the form
\begin{equation}
S_h:=\{ \phi_h \in C(\Omega)~: \phi_h|_{\tau} \in P_{\tau}, ~\forall \tau \in \mathcal{T}_h,~ \phi_h|_{\partial \Omega}=0 \}.
\end{equation}
The set of finite dimensional Kohn--Sham gradient flow 
ground state solutions is denoted as follows,
\begin{equation}
	\Theta_h=\Big\{ \Phi_h\in S_h  \cap \mathcal{M}^N:
	E(\Phi_h)=\min_{U \in \mathcal{M}^N\cap V_h}E(U) ~\mbox{and} ~\Phi_h~ \mbox{solves} ~ \mbox{the weak form of (\ref{GF})}  \Big\}.
\end{equation}
Together with the temporal discretized scheme, the full-discrete problem
of (\ref{linear:sch}) can be formulated as: Find $U_h^{n+1} \in (S_h)^N$ 
such that
\begin{equation}\label{fulldiscrete}
(U_h^{n+1},\Phi_h)-\frac{\Delta t_n}{2}  a_n(U_h^{n+1},\Phi_h)=	
\frac{\Delta t_n}{2}a_n(U_h^n,\Phi_h)+(U_h^n,\Phi_h),~~ 
\forall ~\Phi_h \in (S_h)^N,
\end{equation}
where $U_h^n$ is given and
\begin{equation}
	 a_n(U,\Phi)=(\nabla E(U_h^n) \langle U_h^n, U \rangle, \Phi )-(U_h^n
	 \langle \nabla E(U_h^n), U \rangle, \Phi ).
\end{equation}
Additionally, the following assumptions are proposed in 
\cite{chen2014adaptive}, which would be used in the analysis of the 
spatial convergence in this paper.

Given an initial mesh $\mathcal{T}_0$, based on the refined strategy of
the adaptive finite element method, we can get a series of finite element
subspaces and its corresponding meshes defined as follows:
\begin{equation}
	S_0^{h_0}(\Omega) \subseteq S_0^{h_1}(\Omega) \subseteq \cdots
	\subseteq S_0^{h_k}(\Omega) \subseteq S_0^{h_{k+1}}(\Omega)
	\subseteq \cdots \subseteq S_{\infty}(\Omega) \subseteq H^1_0(\Omega),
\end{equation}
where $S_{\infty}(\Omega)=\overline{\cup_{k=0}^{\infty}S_0^{h_k}(\Omega)}^{H_0^1(\Omega)}$, with a sequence of meshes
$\mathcal{T}_1, \mathcal{T}_2, \cdots$.

For the partition $\mathcal{T}_k$, it can be divided into two subsets
$\mathcal{T}_k^+$ and $\mathcal{T}_k^0$, such that
\begin{equation}
	\mathcal{T}_k^+=\{ \tau \in \mathcal{T}_k: \tau \in \mathcal{T}_l, ~~\forall ~l \geq k\} ~~\mbox{and}~~ \mathcal{T}_k^0=\mathcal{T}_k \backslash 
	\mathcal{T}_k^+,
\end{equation}
where the set $\mathcal{T}_k^+$ contains the elements which are not refined
anymore, and the elements in set $\mathcal{T}_k^0$ will eventually be refined.
So the computational domain $\Omega$ can be denoted in a similar way,
\begin{equation}
	\Omega_k^+=\mathop{\cup}\limits_{\tau \in \mathcal{T}_k^+}\omega_k(\tau)~~\mbox{and}~~
	\Omega_k^0=\mathop{\cup}\limits_{\tau \in \mathcal{T}_k^0}\omega_k(\tau).
\end{equation}
Suppose that $V_{\infty}$ is a Hilbert space which is denoted by
$V_{\infty}=(S_{\infty}(\Omega))^N$ with the inner product
inherited from $\mathcal{H}$ and assume for any
$\Psi_{\infty} \in V_{\infty}$, there holds
\begin{equation}
	\lim_{k \rightarrow \infty} \inf_{U_k \in V_{h_k}} \lVert U_k-U_{\infty}\rVert_{1,\Omega}=0,
	\end{equation}
Using a direct calculation, it follows
\begin{equation}
	\inf_{\tilde{U}_k\in V_{h_k}\cap \mathcal{O}^N} \lVert \tilde{U}_k-
	U_{\infty}\rVert_{1,\Omega} \lesssim \inf_{U_k \in V_{h_{k}}}
	\lVert U_k-U_{\infty}\rVert_{1,\Omega}, ~~\forall~ U_{\infty}
	\in V_{\infty}\cap \mathcal{O}^N, ~~\forall~ k \in \mathbb{N}_0,
\end{equation}
thus, there holds
\begin{equation}\label{appro}
	\lim_{k \rightarrow \infty} \inf_{\tilde{U}_k\in V_{h_k}\cap \mathcal{O}^N}
	\lVert \tilde{U}_k-U_{\infty}\rVert_{1,\Omega}=0,~~\forall~ U_{\infty}
	\in V_{\infty}\cap \mathcal{O}^N.
\end{equation}

\subsection{Algorithm}

In this subsection, focusing on the proposed method for the gradient flow
model, an adaption scheme is given in order to improve the efficiency of 
implementation.
The algorithm below shows how the adaptive strategies work on
our numerical scheme, and the corresponding experiment results will
be shown in Section 5.

\begin{algorithm}[H]

 Set $k=0, n=0$, given an adaptive initial grid $\mathcal{T}_0$ and initial
 value $U_0^0 \in (S_0)^N \cap \mathcal{M}^N$. Choose a positive integer
\textit{maxrefine} based on the computer memory.

 Choose $\Delta t=\min (h_{\tau_i})^2$, $\tau_i$ is the $i$th element of
 current mesh $\mathcal{T}_k$.

 Find $U_k^{n+1} \in (S_k)^N$ such that
 \begin{equation}
 	(U_k^{n+1},\Phi_k)-\frac{\Delta t}{2}a_n(U_k^{n+1},\Phi_k)=\frac{\Delta t}{2}a_n(U_k^{n},\Phi_k)+(U_k^n,\Phi_k),~~ \forall ~\Phi_k \in (S_k)^N.
 \end{equation}

If $E(U_k^{n+1} \geq E(U_k^n))$, set $\Delta t= \frac{\Delta t}{2}$ and
go back to line 3.

If $(mod(n,200)==0)$, set $\Delta t =2\Delta t$.

If $(\frac{E(U_k^n)-E(U_k^{n+1})}{\Delta t} < \epsilon )$,
$U_k^{end}=U_k^{n+1}$, goto line 7 ; else set $n=n+1$ and go back to line 3.

If $(K < maxrefine)$ goto line 8 ; else stop the whole simulation.

Estimate. Compute an a \emph{posteriori} error estimator $\eta_{k,\tau}$
on $\mathcal{T}_k$ \;

Mark. Select the \textbf{minimal} elements set $M_k \subset \mathcal{T}_k$
such that
\begin{equation}
	\sum_{\tau \in M_k} \eta_{k,\tau}^2 \geq \theta
	\sum_{\tau \in M_k} \eta_{k,\tau}^2, ~~ for ~some ~ \theta \in (0,1).
\end{equation}

Refine. Bisect the marked element $M_k$ and get the mesh $\mathcal{T}_{k+1}$\;

Set $U_{k+1}^0=U_k^{end}$, and orthogonalize $U_{k+1}^n$, $n=0, k=k+1$,
go back to line 2.
\caption{Adaptive computing}	
 \label{al:1}
\end{algorithm}

An $h$-adaptive finite element method is employed for efficiency
in this paper. A classical process of using the adaptive mesh methods
consists of the following steps,
\begin{equation}
	\cdots \mbox{solve} \cdots \mbox{estimate} \cdots \mbox{mark} \cdots \mbox{refine} \cdots
\end{equation}
The above whole process means that the gradient flow model is solved firstly
on the current finite element space, then the distribution of the numerical
error on the current mesh is estimated, next, by the means of the
\emph{a posteriori} error estimator, the tetrahedron elements of the current
mesh are marked. Finally, a new finite element space is built on the new
mesh by locally refining or coarsening the current mesh, and the new solutions
are obtained by the interpolation. In this paper, a recovery type
\emph{a posteriori} error estimation is used, this ostensibly rather crude
approach can result in astonishingly good estimates for the true error, which
is given as follows,
\begin{equation}\label{errorindicator}
	\eta_{h,\tau}=h_{\tau} \int_{\tau} \lvert \nabla R (\nabla \rho)\rvert^2 dx,
\end{equation}
in which $R$ stands for the recovery operator \cite{xu2004analysis}.

Moreover, besides the adaptive mesh method, the adaptive time step
strategy is also employed in our paper. Theoretically, the exact
solution of the gradient flow problem can be reached when time $t$
tends to infinity, which means a quite large size of time step is
preferred to accelerate the simulation. The details of the temporal
adaption is depicted as follows: firstly, we set $\Delta t=\min h_{\tau_i}^2$,
where $\tau_i$ is the $i$-th element of the current mesh, then if $\Delta t$
is too large, i.e., $E(U^n)-E(U^{n+1})<0$, we adjust the time step to
$\Delta t= \Delta t/2$. Furthermore, the time step $\Delta t$ would be
enlarged to $2\Delta t$ every 200 steps in order to accelerating the
simulation.

Based on the above linearized numerical scheme, we try
to deliver a complete convergence analysis on it for the all-electron
gradient flow based Kohn--Sham model both temporally and spatially. 
The details are demonstrated in the next section.

\section{Convergence analysis of the numerical scheme}

The gradient flow method has been an attractive approach obtaining 
the ground state of the KSDFT. By an in-depth analysis of the 
model problem, the convergence of our numerical scheme for the gradient 
flow based KSDFT model will be analyzed in this section.
Actually, we want to show the convergence of our numerical scheme
theoretically under some proper assumptions, i.e.,
\begin{equation}\label{convergence}
	E(U_k^n) \rightarrow E(U),~~ k,n \rightarrow \infty,
\end{equation}
where $U$ is the exact solution of the model problem. The proof of
(\ref{convergence}) can be divided into two steps:
\begin{eqnarray}
	&& E(U_k^n) \rightarrow E(U_k^*), ~~~n \rightarrow \infty, \\
	&& E(U_k^*) \rightarrow E(U),~~~~ k \rightarrow \infty.
\end{eqnarray}
The first step shows the temporal convergence, and the second step
indicates the spatial convergence. The details of the proof will be
demonstrated in the following two subsections.

\subsection{Temporal convergence}

The convergence of the linearized temporal scheme 
has been investigated in this subsection. Before given the convergence 
analysis, the Lyapunov stability should be introduced firstly, which 
lays a solid foundation for the convergent analysis of the linear solver.

Various type of stability may be discussed for
the solutions of differential equations describing dynamical systems,
and the most important type is concerning the stability of solutions
near to an equilibrium point, which is the so-called Lyapunov stability.
Additionally, the equilibrium of a system is said to
be asymptotically stable if it is Lyapunov stable, and there exists
$\delta >0$ such that if $\lVert U(0)- U^*\rVert < \delta$, then
as is demonstrated in \cite{dai2020gradient} for the gradient flow model,
\begin{equation}\label{asymstab}
  \left \{
  \begin{array}{l}
  	\displaystyle \lim_{t \rightarrow \infty}  \lvert \lvert \lvert 
  	\nabla_G E \big(U(t)\big ) \rvert \rvert \rvert=0, \\
  	\displaystyle \\
  	\displaystyle \lim_{t \rightarrow \infty} E\big(U(t)\big)=E(U^*).
  \end{array}
    \right .
\end{equation}
The above relation (\ref{asymstab}) describes 
the behaviour of asymptotic stability which is more strongly 
than the Lyapunov stability. Concerning the asymptotic stability
for the Kohn--Sham system, as $t \rightarrow \infty$, the energy 
$E \big(U(t)\big )$ tends to $E(U^*)$, which hints the equilibrium point 
$U^*$ is a local minimizer of the Kohn--Sham energy functional. 

Additionally, the Lemma \ref{le:1} come up in 
\cite{dai2020gradient} will be employed in the demonstration, which 
depicts the Lipschitz continuity of the gradient flow,
\begin{lemma}\label{le:1}
There holds
\begin{equation}
	\lvert \lvert \lvert \nabla_G E(U_i)-\nabla_G E(U_j) \rvert \rvert \rvert
	\leq L_1 \lvert \lvert \lvert E(U_i)- E(U_j) \rvert \rvert \rvert, ~~
	\forall ~ U_i,U_j \in B(U^*,\mbox{max}\{\delta_a,\delta_b\})
\end{equation}	
where $L_1=2\alpha(2\lvert \lvert \lvert \nabla E(U^*) \rvert \rvert \rvert)
+2L max\{\delta_a,\delta_b\}+\alpha L$. Moreover, there exists an upper bound
$\delta s$ of s such that
\begin{equation}
	E(U)-E(g(U,s)) \geq \frac{s}{4N} \Big\lvert \Big\lvert \Big\lvert
	\nabla_G E \Big(\frac{g(U,s)+U}{2} \Big) \Big\rvert \Big\rvert \Big\rvert,
	~~\forall~ U \in B(U^*,\delta_a) \cap \mathcal{M}^N,~~\forall ~ s\in
	[0,\delta_s],
\end{equation}
where $\delta_a,\delta_b$ is related to the function $g(U,s)$.
\end{lemma}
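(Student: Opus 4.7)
The plan is to follow the approach of \cite{dai2020gradient} and treat the two claims separately. For the Lipschitz bound on $\nabla_G E$, I would start from the definition
\begin{equation*}
\nabla_G E(U)=\nabla E(U)\langle U^T U\rangle - U\langle \nabla E(U),U\rangle^T,
\end{equation*}
and decompose $\nabla_G E(U_i) - \nabla_G E(U_j)$ by adding and subtracting intermediate terms so that each piece isolates a difference in only one factor. Applying the triangle inequality then yields three kinds of contributions: (i) differences of $\nabla E$ evaluated at $U_i$ and $U_j$, controlled by the hypothesized Lipschitz constant $L$; (ii) differences of the Gram--type matrices $\langle U_i^T U_i\rangle - \langle U_j^T U_j\rangle$, which telescope into factors of $U_i - U_j$ multiplied by quantities of order $\max\{\delta_a,\delta_b\}$; and (iii) differences $U_i - U_j$ multiplied by fixed norms of $\nabla E$. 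The uniform bound on $|||\nabla E(U)|||$ for $U\in B(U^*,\max\{\delta_a,\delta_b\})$ comes from $|||\nabla E(U^*)|||$ and the Lipschitz constant $L$, while the factor $\alpha$ arises from the boundedness of the matrix norm $\langle U^T U\rangle$ on the extended ball. Collecting the resulting coefficients produces exactly the constant $L_1=2\alpha(2|||\nabla E(U^*)|||)+2L\max\{\delta_a,\delta_b\}+\alpha L$.

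For the second claim, the plan is to view $s\mapsto g(U,s)$ as the smooth curve on $\mathcal{M}^N$ generated by the linearized scheme with $g(U,0)=U$ and tangent proportional to $-\nabla_G E$. I would write
\begin{equation*}
E(U)-E(g(U,s))=-\int_0^s \frac{d}{d\tau}E(g(U,\tau))\,d\tau
\end{equation*}
and, using the chain rule plus the skew--symmetric structure of the Grassmann projection, rewrite the integrand as a pairing that equals $|||\nabla_G E(g(U,\tau))|||^2$ up to manifold factors. The midpoint evaluation at $(g(U,s)+U)/2$ is obtained by replacing $\nabla_G E(g(U,\tau))$ inside the integral by its value at the midpoint and using the Lipschitz estimate from the first part to bound the replacement error. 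The constant $1/(4N)$ then appears from the trace normalization on $\mathbb{R}^{N\times N}$ combined with a factor $1/2$ produced by the midpoint identification. Choosing $\delta_s$ small enough ensures that the replacement error is dominated by the main term, yielding the stated inequality.

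The main obstacle I expect is not the Lipschitz estimate itself, which is essentially bookkeeping once the uniform bounds on the ball are established, but rather the second claim: verifying that the curve $g(U,s)$ produced by our linearized scheme admits exactly the midpoint structure required, and that $\delta_s$ can be chosen independently of $U\in B(U^*,\delta_a)\cap \mathcal{M}^N$. This requires combining the Lipschitz bound of part one with a continuity argument along $g(U,\cdot)$ to guarantee that the entire segment remains inside the ball where the bounds are valid, which is where the compatibility between the present scheme (\ref{linear:sch}) and the midpoint-type analysis in \cite{dai2020gradient} must be checked carefully before invoking their estimate.
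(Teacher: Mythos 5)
The first thing to note is that the paper contains no proof of Lemma \ref{le:1}: it is imported verbatim from \cite{dai2020gradient} and used as a black box in the temporal convergence argument, so there is no in-paper proof to compare your attempt against. Judged on its own terms, your sketch is a plausible reconstruction of the argument in that reference: the add-and-subtract decomposition of $\nabla E(U)\langle U^TU\rangle-U\langle\nabla E(U),U\rangle^T$ for the Lipschitz bound, and the representation $E(U)-E(g(U,s))=-\int_0^s\tfrac{d}{d\tau}E(g(U,\tau))\,d\tau$ followed by a midpoint replacement controlled by that Lipschitz bound, is the standard route, and your closing remark that the entire segment must be shown to remain in the ball is exactly the right concern.

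Two substantive cautions. First, you are proving a corrected statement rather than the one printed: the right-hand side of the first inequality reads $L_1\lvert\lvert\lvert E(U_i)-E(U_j)\rvert\rvert\rvert$, a difference of scalars, whereas your decomposition (and the way the lemma is actually invoked later in the paper, as $\lVert\nabla_G E(U_i)-\nabla_G E(U_j)\rVert\le L_1\lVert U_i-U_j\rVert$) yields $L_1\lvert\lvert\lvert U_i-U_j\rvert\rvert\rvert$; likewise the second inequality is applied downstream with $\lvert\lvert\lvert\nabla_G E(\cdot)\rvert\rvert\rvert^2$, so the first power in the statement is a typo your proof must silently repair --- you should say so explicitly. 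Second, in Lemma \ref{le:1} the map $g(U,s)$ is the one attached to the implicit midpoint scheme of \cite{dai2020gradient}, not to the linearized scheme (\ref{linear:sch}); the compatibility question you defer to the end of your plan belongs to the proof of the subsequent theorem (where the paper does carry out an explicit expansion of $\mathcal{A}_{U^n}U^{n+\frac{1}{2}}$), not to the lemma itself, and folding it into the choice of $\delta_s$ here risks making the later argument circular. Finally, the claim that the bookkeeping ``produces exactly'' $L_1=2\alpha(2\lvert\lvert\lvert\nabla E(U^*)\rvert\rvert\rvert)+2L\max\{\delta_a,\delta_b\}+\alpha L$ is asserted rather than derived; since the only role of $L_1$ downstream is to be a finite constant this is forgivable, but as written it is a gap.
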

$B(U^*,\mbox{max}\{\delta_a,\delta_b\})$, and $B(U^*,\delta_a)$ in Lemma
\ref{le:1} are some type of closed $\delta-$neighbor of U.
The Lemma \ref{le:1} verifies the local Lipschitz continuous in the
neighborhood of a local minimizer $U^* \subset (S_h)^N \cap \mathcal{M}^N$.

Based on the above description of the asymptotic
stability for the gradient flow system and in order to go further,
we try to demonstrate that the asymptotic stability also suit 
for the linear solver employed in this paper. For simplicity, hereafter 
we denote the wavefunction $U_k^n$ as $U^n$ in the following proof. The
demonstration is shown as follows.
\begin{theorem}
If $U_k^0 \in B(U_k^*,\delta)$, and $\Delta t$ is small enough, then
the sequence $U_k^n$ produced by algorithm \ref{al:1} satisfy
\begin{eqnarray}
   &\lim\limits_{n \rightarrow \infty} \lvert \lvert \lvert 
   \nabla_G E(U_k^n) \rvert\rvert \rvert  =  0, \\
   &\lim \limits_{n \rightarrow \infty} E(U_k^n)  =  E(U_k^*).
\end{eqnarray}
\end{theorem}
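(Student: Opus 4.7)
The plan is to adapt the asymptotic-stability argument from \cite{dai2020gradient} (developed there for the implicit midpoint scheme) to the linearized scheme (\ref{linear:sch}), using Proposition \ref{monotonic} and Lemma \ref{le:1} as the two main ingredients. First, I would invoke the orthonormality-preserving proposition to keep the iterates on $\mathcal{M}^N$, and Proposition \ref{monotonic} to deduce that $\{E(U_k^n)\}$ is monotonically nonincreasing. Because $U_k^*$ is a local minimizer in $(S_k)^N \cap \mathcal{M}^N$ and $U_k^0 \in B(U_k^*,\delta)$, a continuity/induction argument (shrinking $\Delta t$ if necessary) shows that the iterates cannot escape this neighborhood, so $E(U_k^n) \geq E(U_k^*)$. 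Hence $\{E(U_k^n)\}$ converges to some limit $E_{\infty} \geq E(U_k^*)$, and the energy gaps $E(U_k^n) - E(U_k^{n+1})$ tend to zero.

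Next, I would connect this decrement with $\lvert\lvert\lvert \nabla_G E(U_k^n) \rvert\rvert\rvert$. Using $U_k^n \in \mathcal{M}^N$ and the definition of $\mathcal{A}_{U^n}$, a short algebraic manipulation shows that $\mathcal{A}_{U^n} U_k^{n+1/2} = \nabla_G E(U_k^n) + \tfrac{1}{2}\mathcal{A}_{U^n}(U_k^{n+1} - U_k^n)$, so up to an $O(\Delta t_n)$ correction the linear scheme behaves like a Grassmannian gradient step evaluated at the midpoint. Combining this with Lemma \ref{le:1} (taken with $s = \Delta t_n$) and bounding the mismatch $\lvert\lvert\lvert U_k^{n+1} - g(U_k^n,\Delta t_n) \rvert\rvert\rvert$ via the Lipschitz estimate, I expect a descent inequality of the shape
\begin{equation}
	E(U_k^n) - E(U_k^{n+1}) \geq \frac{\Delta t_n}{4N} \big\lvert\big\lvert\big\lvert \nabla_G E(U_k^{n+1/2}) \big\rvert\big\rvert\big\rvert - C (\Delta t_n)^2,
\end{equation}
valid whenever $\Delta t_n$ is small enough. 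Since the left-hand side tends to zero, this forces $\lvert\lvert\lvert \nabla_G E(U_k^{n+1/2}) \rvert\rvert\rvert \to 0$, and then the local Lipschitz continuity of $\nabla_G E$ (again from Lemma \ref{le:1}) transfers the limit to $\lvert\lvert\lvert \nabla_G E(U_k^n) \rvert\rvert\rvert \to 0$, yielding the first conclusion.

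Finally, since the iterates remain in the compact set $\overline{B(U_k^*,\delta)} \cap \mathcal{M}^N$ and $\nabla_G E$ vanishes precisely at Stiefel-constrained critical points of $E$, every accumulation point of $\{U_k^n\}$ is such a critical point. By the local uniqueness of the minimizer $U_k^*$ in $B(U_k^*,\delta)$ implicit in the asymptotic-stability setup of \cite{dai2020gradient}, continuity of $E$ then gives $E_\infty = E(U_k^*)$, which is the second conclusion.

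The main obstacle I anticipate is the quantitative descent inequality in the middle step: the implicit linear update defining $U_k^{n+1}$ is not the nonlinear retraction $g(U,s)$ that appears in Lemma \ref{le:1}, so one has to carefully bound $\lvert\lvert\lvert U_k^{n+1} - g(U_k^n,\Delta t_n) \rvert\rvert\rvert$ (via the linear system it solves, together with the Lipschitz control of $\nabla E$) and absorb this discrepancy into a higher-order $(\Delta t_n)^2$ correction, while simultaneously keeping all intermediate quantities inside the neighborhood where Lemma \ref{le:1} applies. Once this comparison is established, the remaining ingredients (monotonicity, compactness, and local uniqueness of the critical point) are essentially textbook.
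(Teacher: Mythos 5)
Your overall architecture matches the paper's proof: monotone energy decay via Proposition \ref{monotonic}, the descent estimate of Lemma \ref{le:1} linking the energy decrement to $\lvert\lvert\lvert\nabla_G E(U^{n+1/2})\rvert\rvert\rvert$, a comparison showing that the linearized update $U^{n+1/2}=(I+\tfrac{\Delta t_n}{2}\mathcal{A}_{U^n})^{-1}U^n$ differs from the retraction $g(U,s)$ of Lemma \ref{le:1} only by terms of higher order in $\Delta t_n$, and compactness plus local Lipschitz continuity of $\nabla_G E$ to identify the limit. The obstacle you flag at the end (controlling the mismatch between the linear update and the retraction, and keeping iterates in the neighborhood where Lemma \ref{le:2} applies) is exactly what the paper's proof spends most of its effort on, so your reading of where the difficulty lies is accurate.

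There is, however, a genuine gap in your middle step. From the inequality $E(U^n)-E(U^{n+1})\geq \tfrac{\Delta t_n}{4N}\lvert\lvert\lvert\nabla_G E(U^{n+1/2})\rvert\rvert\rvert - C(\Delta t_n)^2$, the fact that the left-hand side tends to zero does \emph{not} force the gradient term to vanish: rearranging gives $\lvert\lvert\lvert\nabla_G E(U^{n+1/2})\rvert\rvert\rvert \leq 4N\big(E(U^n)-E(U^{n+1})\big)/\Delta t_n + 4NC\,\Delta t_n$, and for a fixed step $\Delta t_n=\Delta t$ this only yields $\limsup_n\lvert\lvert\lvert\nabla_G E(U^{n+1/2})\rvert\rvert\rvert\leq 4NC\,\Delta t$, while for $\Delta t_n\to 0$ the quotient $\big(E(U^n)-E(U^{n+1})\big)/\Delta t_n$ need not tend to zero. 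The paper avoids this by telescoping: summing the descent inequalities gives $\sum_n \tfrac{\Delta t_n}{4N}\lvert\lvert\lvert\nabla_G E(U^{n+1/2})\rvert\rvert\rvert^2 \leq E(U^0)-\lim_n E(U^n)<\infty$, which combined with the divergence of $\sum_n\Delta t_n$ yields only $\liminf_n\lvert\lvert\lvert\nabla_G E(U^{n+1/2})\rvert\rvert\rvert=0$; one then uses the scheme to bound $\lvert\lvert\lvert U^{n_{k+1}}-U^{n_k}\rvert\rvert\rvert$, extracts by compactness of $\hat S$ a subsequence converging to some $\tilde U$ with $\nabla_G E(\tilde U)=0$, and only afterwards upgrades the liminf to a full limit via the local Lipschitz continuity and identifies $[\tilde U]=[U^*]$ through Lemma \ref{le:2}. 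You need to replace your direct limit claim with this summation--subsequence argument, and note that the divergence of $\sum_n\Delta t_n$ is an additional hypothesis that your sketch never invokes but on which the conclusion actually rests.
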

\begin{proof}
Before approaching the proof, it should be mentioned that all the 
$\delta (\delta_T,\delta_e, \cdots
.)$
involved in the analysis are assumed to be small enough concerned with
the setting of the Lyapunov stability.

Using the Proposition \ref{monotonic} mentioned before, we can easily 
find that the sequence of the approximations of total energy 
$\{E[U^n]\}_{n=0}^{\infty}$ decreases monotonically, moreover, 
$\{E[U^n]\}_{n=0}^{\infty}$ is assumed to be bounded below, thus, 
$\lim \limits_{n \rightarrow \infty} E([U^n])$ exists.

Employing the lower bound for the gap between $E(U^n)$ and 
$E(U^{n+1})$, which is come up in Lemma \ref{le:1},
\begin{eqnarray}
	\sum_{n=0}^{\infty} \frac{\Delta t_n}{4N} \lvert \lvert \lvert
	\nabla_G E(U^{n+1/2})\rvert \rvert \rvert^2 & \leq & \lim_{n \rightarrow \infty} \Big(E(U^0)- E(U^1)+E(U^1)-E(U^2)+ \cdots \nonumber \\ 
	   +E(U^n) -E(U^{n+1})\Big) \nonumber \\
	& \leq & E(U^0)- \lim_{n \rightarrow \infty} E(U^n) < \infty,
\end{eqnarray}
as a result of $\sum \limits_{n=1}^{\infty} \Delta t_n \rightarrow \infty$,
the gradient flow term
$\lvert \lvert \lvert \nabla_G E(U^{n+1/2})\rvert \rvert \rvert$ yields
\begin{equation}
	\lim_{n \rightarrow \infty} \inf\lvert \lvert \lvert \nabla_G E(U^{n+1/2})\rvert \rvert \rvert=0.
\end{equation}
Based on the definition of limit inferior, assume
that $\{U^{n_{k+1/2}}\}_{k=0}^{\infty}$ is a subsequence of
$\{U^{n+1/2}\}_{n=1}^{\infty}$, then there holds
\begin{equation}
	\lim_{k \rightarrow \infty} \lvert \lvert \lvert U^{n_{k+1}}-U^{n_k}
	\rvert \rvert \rvert \leq \delta_T \lim_{k \rightarrow \infty} \lvert \lvert \lvert \nabla_G E(U^{n_{k+1/2}})\rvert \rvert \rvert=0,
	\end{equation}
which is obtained based on our linearized temporal scheme.

It has been mentioned that
	$\hat{S}\equiv \Big\{ U \in (S_h)^N: [U] \in B([U^*],\delta_e) \cap
	\mathcal{L}_e \Big \}$ \cite{dai2020gradient}
is a compact set, and every sequence in a compact set has a convergent
subsequence, thus without loss of generality, the subsequence
 $\{U^{n_k}\}_{k=1}^{\infty}$ satisfies
 \begin{equation}
 	\lim_{k \rightarrow \infty} U^{n_k}=\tilde{U},
 	~~~~ \mbox{for some}~\tilde{U} \in \hat{S}.
 \end{equation}
Thus, motivated by the Lipchitz continuity of $\nabla_G$, 
the formulation of the linear solver, as well as the monotonically 
decreasing property of total energy, we can obtain
\begin{equation}
	\nabla_G E(\tilde{U})=0,
\end{equation}
which hints
\begin{equation}	
	\lim_{n \rightarrow \infty}\inf \lvert \lvert \lvert 
	\nabla_G E(U^{n})\rvert \rvert \rvert =0.
\end{equation}
Since the sequence $\big\{\nabla_G E(U^n)\big\}$ 
is Lipschitz continuous, which is defined in a small neighborhood of 
a local minimizer $U^*$, i.e., $B(U^*,max\{\delta_a,\delta_b\})$, 
where $\delta_a$, $\delta_b$ are assumed to be small enough,
hence we have
\begin{equation}
	\lim_{n \rightarrow \infty}  \lvert \lvert \lvert \nabla_G E(U^n)\rvert \rvert \rvert=0.
\end{equation}
Recalling the Lemma \ref{le:2} proposed in \cite{dai2020gradient}, 
\begin{lemma}\label{le:2}
	There holds
	\begin{equation}
		\hat{g}\Big( B([U^*],\delta_e) \bigcap \mathcal{L}_{E_e}\times [0,\delta_T] \Big) \subset B([U^*],\delta_e)\bigcap
		\mathcal{L}_{E_e},
	\end{equation}
for some $\delta_e >0$, $E_e \in \mathbb{R}$, $\delta_T \in[0,\delta_s]$,
where $\delta_s$ is defined as in Lemma	\ref{le:1}.
\end{lemma}
It can tell us the following relation,
\begin{equation}\label{relation}
	[\tilde{U}] \in B([U]^*,\delta_e)\cap \mathcal{L}_e
 \subset B([U]^*,\delta_c),
 \end{equation}
which is an important step in this proof. However, this relation can not be used
directly owing to our linearized temporal scheme. In order to realize it,
we have the following analysis.

Firstly, we concerned with the notation $g(t)=tU^{n-1}+(1-t)U^n$, 
suppose $E(g(t))$ is differential in $(0,1)$, which implies that 
there exists a number $\xi \in (0,1)$ such that
\begin{eqnarray}\label{Eerror}
   E(U^{n+1})-E(U^n)&=&E(g(1))-E(g(0))\nonumber\\
    &=& \frac{d(E(g(t)))}{dt}\lvert_{t=\xi} \cdot (1-0)\nonumber\\
    &=& tr \big\langle \nabla E (g(\xi))^T \cdot g'(\xi)\big \rangle\\
    &=& tr\big\langle \nabla E (g(\xi))^T \cdot (U^{n+1}-U^n)\big\rangle \nonumber\\
    &=& -\Delta t_n tr\big\langle \nabla E (g(\xi))^T \cdot A_{U^n}U^{n+\frac{1}{2}}\big\rangle. \nonumber
\end{eqnarray}
From the temporal discretization, the following expression can be reached,
\begin{equation}
    U^{n+\frac{1}{2}}=(I+\frac{\Delta t_n}{2}A_{U^n})^{-1}U^n.
\end{equation}
Take this expression into (\ref{Eerror}), we can get
\begin{eqnarray}
  && tr\big\langle \nabla E (g(\xi))^T \cdot A_{U^n}U^{n+\frac{1}{2}}\big\rangle \nonumber\\
  &=&tr\big\langle \nabla E (g(\xi))^T \cdot A_{U^n}\cdot (I+\frac{\Delta t_n}{2}A_{U^n})^{-1}U^n \big\rangle \\
  &=& tr\big\langle \nabla E (g(\xi))^T \cdot A_{U^n}U^n \big\rangle-\frac{\Delta t_n}{2}tr\big\langle \nabla E (g(\xi))^T \cdot A_{U^n}A_{U^n}U^n \big\rangle +o(\Delta t_n). \nonumber
\end{eqnarray}
Since the second term is higher order than the first term, we 
only need to estimate the first term 
$tr\big\langle \nabla E (g(\xi))^T \cdot A_{U^n}U^n \big\rangle$.

Now that the temporal scheme is different from \cite{dai2020gradient}, it
follows that the inequality given below need to be verified,
\begin{equation}
  \lvert \lvert\lvert A_{U^n}U^{n+\frac{1}{2}}-A_{U^*}U^* \rvert\rvert\rvert  
  \leq L_1\lvert\lvert\lvert U^n-U^* \rvert\rvert\rvert,
\end{equation}
which is concerned with the Lemma 4.9 in \cite{dai2020gradient}.

We begin with the following expression,
\begin{eqnarray}\label{transform}
  \lvert\lvert\lvert A_{U^n}U^{n+\frac{1}{2}}\rvert\rvert\lvert^2 &=&
  tr \big\langle  (A_{U^n}U^{n+\frac{1}{2}})^T \cdot A_{U^n}U^{n+\frac{1}{2}}\big\rangle \nonumber\\
  &=& -tr \big\langle  (A_{U^n}U^{n+\frac{1}{2}}) \cdot A_{U^n}U^{n+\frac{1}{2}}\big\rangle \nonumber\\
  &=& -tr \big\langle  (A_{U^n}\cdot(I+\frac{\Delta t_n}{2}A_{U^n})^{-1}U^n) \cdot (A_{U^n}\cdot(I+\frac{\Delta t_n}{2}A_{U^n})^{-1}U^n)\big\rangle \nonumber\\
  &=& -tr \big\langle (A_{U^n}\cdot U^n)^T \cdot (A_{U^n}\cdot U^n)\big\rangle + \mbox{high order terms}\nonumber\\
  &=& \lvert\lvert\lvert A_{U^n}U^n\rvert\rvert\rvert^2+\mbox{high order terms}. 
\end{eqnarray}
Since the following relations
\begin{eqnarray}
  \lVert \nabla_G E(U_i)- \nabla_G E(U_j)\rVert &\leq& L_1 \lVert U_i-U_j\rVert,  \\
   \nabla_G E(U)=A_U U,
\end{eqnarray}
hold, thus the Lipschitz continuity is also satisfied for $A_U$, i.e.,
\begin{equation}
 \lvert\lvert\lvert A_{U^n}U^{n+\frac{1}{2}}\rvert\rvert\rvert  \leq 
 \lvert\lvert\lvert A_{U^n}U^n\rvert\rvert\rvert,
\end{equation}
which hints
\begin{eqnarray}
  \lvert\lvert\lvert A_{U^n}U^{n+\frac{1}{2}}-A_{U^*}U^*\rvert\rvert\rvert 
  &\leq & \lvert\lvert\lvert  A_{U^n}U^n-A_{U^*}U^*\rvert\rvert\rvert  \nonumber\\
  & \leq & L_1 \lvert\lvert\lvert U^n-U^*\rvert\rvert\rvert \\
  & \leq & L_1 \mbox{max}\{\delta a, \delta b\}.\nonumber
\end{eqnarray}
Additionally, the high order terms involved in (\ref{transform}) 
should be estimated, which is organized as:
\begin{equation}
    -\Delta t_n tr \big\langle A_{U^n}U^n\big\rangle \cdot
    tr \big\langle A_{U^n}A_{U^n}U^n\big\rangle +o(\Delta t_n).
\end{equation}
The assumption of the time step in the convergence of
temporal scheme for gradient flow model is given as
\begin{equation}
    \sup\{ \Delta t_n: n \in \mathbb{N} \} \leq \delta_T.
\end{equation}
Concerned with the assumption that $\delta_T$ involved in this analysis 
should be  small enough.  It follows that the high order terms mentioned
above can be organized as follows:
\begin{equation}
	-\Delta t_n tr \big\langle A_{U^n}U^n\big\rangle \cdot tr \big\langle
	A_{U^n}A_{U^n}U^n\big\rangle+o(\Delta t_n),
\end{equation}
Based on the assumption that
\begin{equation}
	\sup \{\Delta t_n: n \in \mathbb{N}\} \leq \delta_{T},
\end{equation}
we can assume $\lim \limits_{n \rightarrow \infty} \Delta t_n=0$.
In addition, the boundedness of $tr \big\langle A_{U^n}U^n\big\rangle$
and $tr \big\langle A_{U^n}A_{U^n}U^n\big\rangle$ can be easily 
reached by employing the local Lipschitz continuity of $\nabla_G E$.

Based on the above analysis, the high order terms 
can be neglected asymptotically concerning with the definition of 
the closed $\delta-$neighborhood $B([U^*],\delta)$. In such a 
case, we can employ the Lemma \ref{le:2}, and obtain the 
relation (\ref{relation}).

Consequently, it is trivial to get $[\tilde{U}]=[U^*]$, and
\begin{equation}
	\lim_{n\rightarrow \infty} E(U^n)= \lim_{k \rightarrow \infty} E(U^{n_k})
	= E(U^*),
\end{equation}
which completes the proof.
\end{proof}

\subsection{Spatial convergence}

Motivated by the work \cite{chen2014adaptive}, in which
Chen and co-authors have proved the convergence of AFE approximations
under the pseudopotential framework. In this paper, the convergence of
the AFE approximations is studied for the all-electron Kohn--Sham
gradient flow model. Several assumptions and Lemmas will be introduced
firstly before demonstrating the convergence of the AFE approximations.

It is noted that $V_{xc}=d \varepsilon_{xc}/d \rho$, where
$\varepsilon_{xc}$ is the exchange-correlation energy per unit volume.
The following assumptions \cite{chen2013numerical} give the
boundedness of $V_{xc}$ directly,

\begin{itemize}
\item[$A_1$]:
  $\lvert\varepsilon_{xc}'(t) \lvert+\lvert t\varepsilon_{xc}''(t)
  \lvert \in \mathbb{P}(p_1,(c_1,c_2)) $ for some $p_1\in [0,2]$,
where $\mathbb{P}$ is a functional space defined as
   \begin{equation}
     \begin{array}{l}
     \mathbb{P}(p_1,(c_1,c_2)) =\\
       \\
       \hfill \quad\quad\{ f:\exists \ a_1,a_2 \in \mathcal{R} \ \mbox{such that} \ c_1t^{p_1}+a_1 \leq f(t)\leq c_2t^{\textcolor{black}{p_1}}+a_2, \forall t\geq 0\}.
\end{array}
\end{equation}
\item[$A_2$]: There exists a constant $\alpha \in (0,1]$ such that
  \begin{equation}
    \lvert\varepsilon_{xc}''(t) \lvert+\lvert t\varepsilon_{xc}'''(t) \lvert \lesssim  1+t^{\alpha-1}  \quad \forall t>0.
  \end{equation}
\end{itemize}

Furthermore, the following lemma which has been demonstrated
in our previous work \cite{shen2019asymptotics} gives the boundedness
of the external energy.
\begin{lemma}
Let $R_k$ be the location of the k-th nuclei with the nuclear charge $Z_k$,
$u_i$ is the i-th wavefunction, then there exists a constant c such that
\begin{equation}
	\int_{\mathbb{R}^3}\frac{Z_k}{\lvert \bm{r}-R_k\rvert}\lvert u_i\rvert^2
	d \bm{r} \leq c \int_{\mathbb{R}^3} \lvert \bm{r}-R_k\rvert
	\lvert \nabla u_i\rvert^2 d \bm{r}.
\end{equation}
\end{lemma}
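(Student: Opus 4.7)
The inequality to prove is a weighted Hardy-type estimate that trades the Coulomb singularity $|r-R_k|^{-1}$ on the left for a $|r-R_k|$ weight on the right. My plan is to translate the origin to $R_k$ via the change of variables $s = r - R_k$, so that the problem reduces to showing
\begin{equation*}
   \int_{\mathbb{R}^3} \frac{|u|^2}{|s|}\, ds \;\leq\; \int_{\mathbb{R}^3} |s|\, |\nabla u|^2\, ds,
\end{equation*}
after which multiplying through by the nuclear charge $Z_k$ yields the stated bound with $c = Z_k$. Since $u_i \in H_0^1(\Omega)$ has compact support, decay at infinity is not an issue, and the sole analytical obstacle is the singularity of the integrand at the origin.

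The key algebraic identity I would exploit is the pointwise relation
\begin{equation*}
   \nabla \cdot \Bigl(\frac{s}{|s|}\Bigr) = \frac{2}{|s|} \quad \text{on } \mathbb{R}^3 \setminus \{0\},
\end{equation*}
which lets me rewrite $1/|s|$ as (half) a divergence. Formally, integration by parts gives
\begin{equation*}
   \int_{\mathbb{R}^3} \frac{|u|^2}{|s|}\, ds
   = \frac{1}{2}\int_{\mathbb{R}^3} |u|^2 \nabla \cdot \Bigl(\frac{s}{|s|}\Bigr) ds
   = -\int_{\mathbb{R}^3} u\, \nabla u \cdot \frac{s}{|s|}\, ds
   \leq \int_{\mathbb{R}^3} |u|\, |\nabla u|\, ds.
\end{equation*}
After that, a weighted Cauchy--Schwarz inequality, splitting the integrand as $(|u|/|s|^{1/2})(|s|^{1/2}|\nabla u|)$, produces
\begin{equation*}
   \int_{\mathbb{R}^3} |u||\nabla u|\, ds
   \leq \Bigl(\int_{\mathbb{R}^3} \frac{|u|^2}{|s|}\, ds\Bigr)^{1/2}
        \Bigl(\int_{\mathbb{R}^3} |s|\, |\nabla u|^2\, ds\Bigr)^{1/2},
\end{equation*}
and dividing through by the common factor gives exactly the desired inequality.

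The main obstacle I anticipate is justifying the integration by parts in the presence of the $1/|s|$ singularity at $s = 0$, since $s/|s|$ is bounded but $|u|^2/|s|$ need not be classically integrable near the nucleus without a quantitative bound. I plan to handle this by a standard cutoff argument: replace $s/|s|$ by $\phi_\varepsilon(|s|)\, s/|s|$ where $\phi_\varepsilon$ is a smooth radial function vanishing on $|s|\leq\varepsilon/2$ and equal to $1$ for $|s|\geq\varepsilon$, perform the integration by parts on the punctured domain, and then let $\varepsilon \to 0$. The boundary terms on the sphere $|s|=\varepsilon$ are of order $\varepsilon^2 \cdot |u(R_k)|^2/\varepsilon \to 0$ using $u \in H^1$ and Sobolev embeddings, while the interior correction terms vanish by dominated convergence once the right-hand side $\int |s||\nabla u|^2\, ds$ is shown to be finite (which follows from $|s|$ being locally bounded on the support of $u_i$). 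With this regularization in place, the remaining steps are routine and the constant is in fact explicit, $c = Z_k$.
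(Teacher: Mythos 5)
The paper itself does not prove this lemma: it is imported verbatim from \cite{shen2019asymptotics} with no argument given here, so there is no in-paper proof to compare against. Judged on its own, your argument is the standard (and correct) route: the identity $\nabla\cdot\bigl(s/\lvert s\rvert\bigr)=2/\lvert s\rvert$ in $\mathbb{R}^3$, integration by parts, and a weighted Cauchy--Schwarz splitting $\lvert u\rvert\,\lvert\nabla u\rvert=\bigl(\lvert u\rvert/\lvert s\rvert^{1/2}\bigr)\bigl(\lvert s\rvert^{1/2}\lvert\nabla u\rvert\bigr)$, followed by dividing out the common factor; the cutoff regularization you propose is the right way to make the division legitimate (establish $A_\varepsilon\le A_\varepsilon^{1/2}B^{1/2}$ with $A_\varepsilon$ finite, conclude $A_\varepsilon\le B$ uniformly, then let $\varepsilon\to 0$ by monotone convergence). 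One small correction: your estimate of the boundary term on $\lvert s\rvert=\varepsilon$ invokes $\lvert u(R_k)\rvert^2$, but $H^1(\mathbb{R}^3)$ does not embed in $C^0$, so pointwise evaluation at the nucleus is not available. Fortunately you do not need it: on the punctured domain $\{\lvert s\rvert>\varepsilon\}$ the outward normal on the inner sphere is $-s/\lvert s\rvert$, so the boundary contribution is $-\tfrac12\int_{\lvert s\rvert=\varepsilon}\lvert u\rvert^2\,dS\le 0$ and can simply be discarded from the inequality. With that repair the proof is complete and even gives the explicit constant $c=Z_k$ (or $c=1$ if one absorbs $Z_k$ into the right-hand side), which is sharper than the unquantified constant in the statement.
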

By employing the above lemmas and some analysis, we can get the 
boundedness of the total energy of the all-electron Kohn--Sham system,
and also get an \emph{a posteriori} error estimator for the total 
energy, i.e., $\eta_{loc}$ \cite{shen2019asymptotics}, which takes
the following expression,
\begin{equation}
	\eta_{loc}=\sum_{i=1}^N \lVert u_i^h-\tilde{u}_{h,i}^H
	\rVert_{1,\mathcal{T}_k^h}^2+\lVert \tilde{\phi}_h^H-\phi^h\rVert_{1,\mathcal{T}_k^h}^2+\frac{1}{8 \pi}
	\lvert \tilde{\phi}_h^H-\phi^h \rvert_{1,\mathcal{T}_k^h}^2
\end{equation}
Moreover, based on some classical results proposed in finite element 
methods \cite{ciarlet2002finite}, an upper bound for $\eta_{loc}$ 
has been obtained, which yields,
\begin{equation}
 \eta_{loc} \leq C (h_K)^{2k} \lvert u^h\rvert_{k+1,\mathcal{T}_K^h}^2,
\end{equation}
where $k$ stands for the order of the approximate polynomials. Without
loss of generality, we take $k=1$ in the following analysis. 
By using the Sobolev imbedding theorem, we can reach
\begin{eqnarray}\label{upperbound}
	\eta_{loc} &\leq & C (h_K)^{2} \lvert u^h\rvert_{2,\mathcal{T}_K^h}^2
	\nonumber\\
	           & \leq & \tilde{C} \lVert u^h\rVert^2_{2,6,\omega_h(\tau)},
\end{eqnarray}
where $\omega_h(\tau)$ stands for an element defined in the spatial 
partition mentioned before. To unify the expression, we denote 
$U^h$ by $U_k$ in the following content.

Before proceeding further, we need to verify the following Lemma proposed
in \cite{chen2014adaptive} holds for the all-electron Kohn--Sham gradient
flow model as well.
\begin{lemma}\label{le:4}
	Let $\{\Phi_k\}_{k \in \mathbb{N}}$ be the sequence produced by
	the Algorithm \ref{al:1}. If Assumption \textbf{A1} is satisfied, then
	\begin{equation}
		\lim_{k \rightarrow \infty} \max_{\tau \in \mathcal{M}_{k}}
		\eta_{loc}(U_{k},\tau)=0,
	\end{equation}
\begin{equation}
	\lim_{k \rightarrow \infty} \langle \bm{R}_k(U_k),\Gamma\rangle=0,~~
	\forall ~ \Gamma \in \mathcal{H}.
\end{equation}
\end{lemma}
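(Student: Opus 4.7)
The plan is to prove the two limits in turn, both resting on a uniform bound for $\{U_k\}$ that I would establish first. From the monotone energy decrease in Proposition \ref{monotonic} (carried across the adaptive loop, since after interpolation onto $\mathcal{T}_{k+1}$ the temporal subloop again drives the energy downward, so that $E(U_k^{\mathrm{end}})$ is non-increasing in $k$) together with Assumption $A_1$ and the external-energy estimate of the preceding lemma, I would derive that $\{U_k\}$ is uniformly bounded in $\mathcal{H}=(H_0^1(\Omega))^N$. Bootstrapping this bound via elliptic regularity for the Kohn--Sham operator with effective potential in $L^p$ (which is where $A_1$ is used, to control $V_{xc}$ in terms of $\rho$) would give $\{U_k\}$ uniformly bounded in $W^{2,6}_{\mathrm{loc}}$ away from the nuclei.

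For the first limit I would combine the upper bound (\ref{upperbound}), namely $\eta_{\mathrm{loc}}(U_k,\tau)\leq \tilde{C}\,\lVert U_k\rVert_{2,6,\omega_k(\tau)}^2$, with the key observation that any element marked at iteration $k$ lies by construction in $\mathcal{T}_k^0$ and is therefore subdivided within finitely many subsequent iterations of Algorithm \ref{al:1}. Consequently $\max_{\tau\in\mathcal{M}_k}\mathrm{diam}(\omega_k(\tau))\to 0$, and the absolute continuity of the Lebesgue integral on the shrinking patch forces the localised $W^{2,6}$ norm to vanish, which gives the first claim.

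For the second limit I would exploit Galerkin orthogonality: since $U_k$ solves the discrete weak form on $V_{h_k}$, the residual satisfies $\langle \bm{R}_k(U_k),\Phi_k\rangle=0$ for every $\Phi_k\in V_{h_k}$. For arbitrary $\Gamma\in\mathcal{H}$ I would choose $\Gamma_k\in V_{h_k}$ with $\lVert \Gamma-\Gamma_k\rVert_{1,\Omega}\to 0$, which is guaranteed by the approximation property (\ref{appro}) after a standard density argument. Splitting $\langle \bm{R}_k(U_k),\Gamma\rangle=\langle \bm{R}_k(U_k),\Gamma-\Gamma_k\rangle$ and using $\lVert \bm{R}_k(U_k)\rVert_{\mathcal{H}'}\lesssim 1$ (a consequence of the uniform $H^1$ bound on $U_k$ and the boundedness of the effective potential under $A_1$), the estimate $|\langle \bm{R}_k(U_k),\Gamma\rangle|\lesssim \lVert \Gamma-\Gamma_k\rVert_{1,\Omega}\to 0$ closes the argument.

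The main obstacle I anticipate is the uniform $W^{2,6}$-type control used in the first step. In Chen et al.\ the pseudopotential framework gives this regularity globally on $\Omega$, whereas in the all-electron setting the Coulomb singularities of $V_{\mathrm{ext}}$ at the nuclei obstruct global $W^{2,6}$ regularity. I would handle this by partitioning the domain into a region bounded away from the nuclei, where standard interior elliptic regularity delivers the required estimate, and small neighbourhoods of each $\bm{R}_k$, where I would replace (\ref{upperbound}) by a weighted bound using the Hardy-type inequality of the preceding lemma, $\int |u_i|^2/|\bm{r}-\bm{R}_k|\,d\bm{r}\lesssim \int |\bm{r}-\bm{R}_k|\,|\nabla u_i|^2\,d\bm{r}$, to transfer the singular contribution into a weighted energy norm that remains uniformly bounded. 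The refinement near the nuclei forced by the error indicator (\ref{errorindicator}) then guarantees that these contributions to $\eta_{\mathrm{loc}}$ still vanish as $k\to\infty$.
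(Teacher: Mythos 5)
Your treatment of the first limit follows essentially the same route as the paper: the upper bound (\ref{upperbound}) reduces the claim to showing that the patches $\omega_k(\tau)$ of the marked elements shrink, which holds because every marked element belongs to $\mathcal{T}_k^0$, so that $\lvert\omega_k(\tau_k)\rvert\lesssim h_{\tau_k}^3\lesssim\lVert h_k\chi_{\Omega_k^0}\rVert_{0,\infty,\Omega}^3\rightarrow 0$. Your additional discussion of \emph{why} the localized $W^{2,6}$ norm then vanishes --- a uniform $W^{2,6}$ bound away from the nuclei by elliptic regularity, and a weighted Hardy-type estimate near the Coulomb singularities --- is more careful than the paper, which only extracts an $H^1$-convergent subsequence and then asserts that the right-hand side of (\ref{upp:2}) tends to zero; in the all-electron setting this extra care is a genuine improvement rather than a detour.

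The second limit is where your argument has a real gap. You approximate an arbitrary $\Gamma\in\mathcal{H}$ by $\Gamma_k\in V_{h_k}$ and invoke Galerkin orthogonality, but the approximation property (\ref{appro}) and the assumption preceding it only give density of $\cup_k V_{h_k}$ in $V_{\infty}=(S_{\infty}(\Omega))^N$, and $S_{\infty}(\Omega)$ is in general a \emph{proper} closed subspace of $H_0^1(\Omega)$: the adaptive loop never refines the elements of $\mathcal{T}_k^+$, so a test function with fine-scale behaviour on $\Omega_k^+$ cannot be approximated in $V_{h_k}$, and $\lVert\Gamma-\Gamma_k\rVert_{1,\Omega}$ need not tend to zero. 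Your splitting therefore only proves $\langle\bm{R}_k(U_k),\Gamma\rangle\rightarrow 0$ for $\Gamma\in V_{\infty}$. Controlling the residual on the never-refined region is precisely the hard point of the adaptive convergence theory of \cite{chen2014adaptive}: one must use the marking strategy, which guarantees that the indicators of all unmarked (hence possibly never-refined) elements are dominated by $\max_{\tau\in\mathcal{M}_k}\eta_{loc}(U_k,\tau)$, so that the first limit feeds into the second. The paper instead routes the bound through the residual--error equivalence of \cite{dai2008convergence} together with the Poincar\'e--Friedrichs inequality, estimating $\lvert\langle\bm{R}_k(U_k),\Gamma\rangle\rvert\lesssim\sum_{\tau\in\mathcal{T}_k}\eta_{loc}(U_k,\tau)\lVert\Gamma\rVert_{1,\omega_k(\tau)}$; either mechanism supplies what your density argument cannot, namely control of $\bm{R}_k(U_k)$ on $\Omega_k^+$.
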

\begin{proof}
According to the construction of the finite element subspaces 
and the definition of the dense set, we can find for any 
subsequences of $\{U_k\}$, there exists a convergent subsequence 
$\{U_{k_m}\}$ and $U_{\infty}$, $(\Lambda_{\infty},U_{\infty}) 
\in \Theta_{\infty}$ such that
\begin{equation}
	U_{k_m} \rightarrow U_{\infty}, ~~\mbox{in} ~\mathcal{H},
\end{equation}
for simplicity, hereafter we denote $U_{k_m}$ as $U_k$ in this proof.
Based on the (\ref{upperbound}), the following result can be reached,
\begin{eqnarray}\label{upp:2}
\eta_{loc}(U_k,\tau_k) \lesssim	\lVert U_k \rVert_{2,6, \omega_k(\tau_k)},
\end{eqnarray}
where $\mathcal{M}_k$ is constructed by using the Maximum Strategy in
the simulation, and $\tau_k \in \mathcal{M}_k$, such that
\begin{equation}
	\eta_{loc}(U_k,\tau_k)=\max_{\tau \in \mathcal{M}_k}
	\eta_{loc}(U_k,\tau).
\end{equation}
Based on the construction of the mesh size functions
$\{h_k\}_{k \in \mathbb{N}}$ in \cite{chen2014adaptive}, there holds
\begin{equation}
	\lvert \omega_k(\tau_k)\rvert \lesssim h_{\tau_k}^3 \lesssim
	\lVert h_k \chi_{\Omega_k^0}\rVert^3_{0,\infty,\Omega} \rightarrow 0,
	~~ \mbox{as}~ k \rightarrow \infty,
\end{equation}
which hints that the right-hand side of inequality (\ref{upp:2}) goes to
zero, which means that
\begin{equation}
	\lim_{k \rightarrow \infty} \max_{\tau \in \mathcal{M}_{k}}
		\eta(U_{k},\tau)=0.
\end{equation}
Moreover, a theorem given in \cite{dai2008convergence} has 
established a basic relationship between the error estimate of 
finite element eigenvalue approximations and the associated finite 
element boundary value solutions. Thus, concerned with the 
classical residual-based \emph{a posteriori} error estimate for
the boundary value problems, there exists an equivalent relationship
between the residuals, i,e., the element residual and the boundary 
residual and the error. According to the theorem 
\cite{dai2008convergence}, this equivalence holds for eigenvalue 
problems as well. Since the Kohn--Sham equation can be regarded as 
a generalized eigenvalue problem, there should be an equivalence 
between the residuals and the error of wavefunctions, 
moreover, recalling the formulation of the \emph{a posteriori} 
error indicator $\eta_{loc}$ mentioned above and employing the 
Poincar$\acute{e}$-Friedrichs inequality, there holds
\begin{equation}
	\lvert\langle \bm{R}_k(U_k),\Gamma\rangle \rvert \lesssim \sum_{\tau \in \mathcal{T}_k}\lVert U-U_k\rVert_{1,\omega_k(\tau)}
	\lVert \Gamma\rVert_{1,\omega_k(\tau)}
	\lesssim \sum_{\tau \in \mathcal{T}_k}\eta_{loc}(U_k,\tau)
	\lVert \Gamma\rVert_{1,\omega_k(\tau)},~~\forall ~ \Gamma \in \mathcal{H},
\end{equation}
which means
\begin{equation}
	\lim_{k \rightarrow \infty} \langle \bm{R}_k(U_k),\Gamma\rangle=0,~~
	\forall ~ \Gamma \in \mathcal{H},
\end{equation}
which completes the proof.
\end{proof}
Consequently, based on the above analysis, we are ready to state 
and prove our main result: showing that the following Theorem \ref{spacon} 
holds for the all-electron Kohn--Sham gradient flow model.
\begin{theorem}\label{spacon}
	Let $\{\Theta_k\}_{k \in \mathbb{N}}$ be the sequence generated by the
	Algorithm \ref{al:1}. If the initial mesh $\mathcal{T}_0$ is sufficiently
	fine and Assumption $\bm{A1}$ is satisfied, then
	\begin{equation}
		\lim_{k \rightarrow \infty}E_k=\min_{U \in \mathcal{O}^N }E(U).
\end{equation}
\end{theorem}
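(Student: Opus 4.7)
The plan is to adapt the adaptive-FE convergence framework of \cite{chen2014adaptive} to the all-electron setting, leveraging the boundedness of the external energy established via the Hardy-type lemma just stated. The strategy is the standard one: extract a convergent subsequence $\{U_{k_m}\}$ from $\{U_k\}$, identify its limit $U_\infty$ as a critical point using Lemma \ref{le:4}, prove energy continuity along the subsequence, and then upgrade $U_\infty$ to a global minimizer through the density property (\ref{appro}). A subsequence principle would then yield convergence of the full sequence.

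For the first two steps, I would combine the monotonicity of $\{E(U_k)\}$ (Proposition \ref{monotonic}, applied within each inner temporal loop of Algorithm \ref{al:1}) with a lower bound on the Kohn--Sham total energy. The latter is the distinctive ingredient in the all-electron setting: the Hardy-type estimate $\int \frac{Z_k}{\lvert\bm{r}-\bm{R}_k\rvert}\lvert u_i\rvert^2 \lesssim \int \lvert\bm{r}-\bm{R}_k\rvert \lvert\nabla u_i\rvert^2$ controls $E_{ext}$ against the kinetic energy, while assumptions $\bm{A_1}$--$\bm{A_2}$ control the exchange-correlation contribution. This would yield a uniform $H_0^1$-bound on $\{U_k\}$; Banach--Alaoglu together with the compact embedding $H_0^1 \hookrightarrow L^q$ would then deliver a subsequence converging weakly in $\mathcal{H}$ and strongly in appropriate $L^q$-spaces to some $U_\infty \in V_\infty \cap \mathcal{M}^N$, with the Stiefel membership preserved under $L^2$-limits and (\ref{appro}). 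Lemma \ref{le:4} then lets me pass to the limit in the residual identity $\langle \bm{R}_k(U_k),\Gamma\rangle \to 0$, producing a limiting Euler--Lagrange identity that identifies $U_\infty$ as a critical point of $E$ on $V_\infty\cap \mathcal{M}^N$.

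The third step is the passage $E(U_{k_m}) \to E(U_\infty)$. The kinetic term is handled by combining weak $H^1$-convergence with the critical-point identity tested against $U_\infty$; the Hartree term by convolution compactness; the exchange-correlation term by the growth bounds in $\bm{A_1}$--$\bm{A_2}$ applied to the strongly convergent density $\rho_{k_m}$; and the external term again by the Hardy-type lemma together with strong $L^2$-convergence of the density. For the final upgrade to global minimality, any competitor $V \in V_\infty \cap \mathcal{O}^N$ admits by (\ref{appro}) an approximating sequence $V_k \in V_{h_k}\cap\mathcal{O}^N$ with $E(V_k)\to E(V)$; combining $E(U_k)\le E(V_k)$ (from the inner minimizing loop on each refinement level) with the continuity of $E$ just shown would yield $E(U_\infty)\le E(V)$, hence $E(U_\infty)=\min_{U\in \mathcal{O}^N} E(U)$. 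Since the argument applies to an arbitrary starting subsequence, a standard subsequence principle upgrades this to convergence of the full sequence $E(U_k)$.

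The main obstacle, I expect, will be the third step intertwined with the minimality upgrade. The linearized gradient-flow only guarantees descent to a critical point, so to convert critical-point convergence into global-minimum convergence one must simultaneously exploit (i)~the density property (\ref{appro}) of the adaptive spaces and (ii)~strong continuity of the nonlinear pieces of $E$ against the singular all-electron Coulomb potential. In \cite{chen2014adaptive} the analogous step is cleaner because $V_{ext}$ is bounded in the pseudopotential framework; here, the Hardy-type lemma and the uniform $H^1$-bound it furnishes are precisely what allow the same chain of limits to go through without losing control of the external energy.
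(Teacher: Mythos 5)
Your first stage---extracting a weakly convergent subsequence via a uniform $H^1$ bound, using the compact embedding $H_0^1(\Omega)\hookrightarrow L^p(\Omega)$ to pass to the limit in the external, Hartree and exchange--correlation terms, invoking weak lower semicontinuity for the kinetic term, and then playing the two minimality inequalities against each other with the help of the density property (\ref{appro})---matches the paper's first step and correctly yields $\lim_{k\to\infty}E_k=\min_{U\in V_\infty\cap\mathcal{O}^N}E(U)$. The gap is in your final upgrade. Your competitors $V$ range only over $V_\infty\cap\mathcal{O}^N$, so the chain $E(U_\infty)\le E(V)$ gives minimality of $U_\infty$ \emph{within} $V_\infty$, and you then assert ``hence $E(U_\infty)=\min_{U\in\mathcal{O}^N}E(U)$'' without justification. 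This does not follow: under $h$-adaptive refinement the elements in $\mathcal{T}_k^+$ are never refined, so $V_\infty=\overline{\cup_k V_{h_k}}$ can be a proper closed subspace of $\mathcal{H}$, and the minimum of $E$ over $V_\infty\cap\mathcal{O}^N$ may a priori exceed the minimum over all of $\mathcal{O}^N$.

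Notice that you never use the hypothesis that the initial mesh $\mathcal{T}_0$ is sufficiently fine; that is the ingredient the paper deploys to close exactly this gap. The paper's second step first shows, via Lemma \ref{le:4} and the vanishing residual, that the limit $\Phi_\infty$ solves the weak Kohn--Sham eigenvalue problem, i.e.\ $(\Lambda_\infty,\Phi_\infty)\in\mathcal{W}$; it then chooses $\mathcal{T}_0$ so fine that $E_0<\min_{(\Lambda,U)\in\mathcal{W}\setminus\Theta}E(U)$, and uses the monotonicity $E_k\le E_0$ to conclude $E(\Phi_\infty)<\min_{\mathcal{W}\setminus\Theta}E(U)$. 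Since $\Phi_\infty$ is a critical point whose energy lies strictly below that of every non-ground-state critical point, it must belong to $\Theta$, which forces $E(\Phi_\infty)=\min_{U\in\mathcal{O}^N}E(U)$. You do identify $U_\infty$ as a critical point via Lemma \ref{le:4}, but you never exploit that fact in the minimality argument; without the energy-barrier mechanism your proof cannot rule out that the adaptive sequence converges to a saddle point or local minimizer of $E$ restricted to $V_\infty$, and so it establishes a strictly weaker conclusion than the theorem claims.
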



\begin{proof}
	
In the first step, we need to show that
\begin{equation}\label{firstresult}
	\lim_{k \rightarrow \infty} E_k=\min_{U \in V_{\infty}\cap \mathcal{O}^N}E(U).
\end{equation}
If we want to prove this limitation holds, we need to find a sequence
converges to $\Phi_{\infty} \in V_{\infty}$ and satisfying
\begin{equation}
	E(\Phi_{\infty})=\min_{U \in V_{\infty}\cap \mathcal{O}^N}E(U).
\end{equation}
Due to the boundedness of the finite dimensional approximations and the
Eberlein-Smulian theorem, for a given $\Phi_{\infty} \in V_{\infty}$, there
exists a weakly convergent subsequence $\{\Phi_{k_{mj}}\}_{j \in N_0}$
such that
\begin{equation}
	\Phi_{k_{mj}}\rightharpoonup \Phi_{\infty}~~ \mbox{in} ~\mathcal{H}.
\end{equation}
Since $H_0^1(\Omega)$ is compactly embedded in $L^p(\Omega)$ for $p \in[2,6)$,
every sequence in $H_0^1(\Omega)$ has a subsequence that is Cauchy in the
norm $\lVert \cdot \rVert_{L^p(\Omega)}$, i.e., $\Phi_{k_{mj}}\rightarrow
\Phi_{\infty}$ in $\lVert \cdot \rVert_{L^p(\Omega)}$ norm as
$j \rightarrow \infty$.\\
Then there yields,
\begin{equation}
	\left \{
	\begin{array}{l}
	\displaystyle \lim_{j \rightarrow \infty}\int_{\Omega}V_{ext}(x)\rho_{\Phi_{k_{mj}}}dx=	\int_{\Omega}V_{ext}(x)\rho_{\Phi_{\infty}}dx\\
	\displaystyle \\
	\displaystyle \lim_{j \rightarrow \infty}\int_{\Omega}v_{xc}(\rho_{\Phi_{k_{mj}}})dx=	\int_{\Omega}v_{xc}(\rho_{\Phi_{\infty}})dx\\
	\displaystyle \\
	\displaystyle \lim_{j \rightarrow \infty}\int_{\Omega}V_{Har}(\rho_{\Phi_{k_{mj}}})\rho_{\Phi_{k_{mj}}}dx=\int_{\Omega}V_{Har}(\rho_{\Phi_{\infty}}) \rho_{\Phi_{\infty}}dx.
	\end{array}
	\right.
\end{equation}
Moreover, considering the norm is weakly lower-semicontinuous, there holds
\begin{equation}
	\lim_{j \rightarrow \infty}\inf \lVert \nabla \Phi_{k_{mj}}\rVert_{0,\Omega}
	\geq \lVert \nabla \Phi_{\infty}\rVert_{0,\Omega}.
\end{equation}
Based on the above deduction, we can reach the following result:
\begin{equation}
	\lim_{j \rightarrow \infty}\inf E(\Phi_{k_{mj}})\geq E(\Phi_{\infty}).
\end{equation}
Suppose $U_{\infty}$ is a minimizer of the energy functional in
$V_{\infty}\cap\mathcal{O}^N$. (\ref{appro}) implies that there 
exists a sequence $\{U_j\}_{j \in \mathbb{N}_0}$ such that 
$U_j \in V_{k_{mj}}\cap \mathcal{O}^N$
and $U_j \rightarrow U_{\infty}$ in $\mathcal{H}$. Therefore,
\begin{equation}
	E(U_{\infty})=\lim_{j\rightarrow \infty}E(U_j).
\end{equation}
Since $U_{\infty}$ is a minimizer of the energy functional in
$V_{\infty}\cap \mathcal{O}^N$ and $\Phi_{k_{mj}}$ is also a 
minimizer of the energy functional in $V_{k_{mj}}\cap \mathcal{O}^N$, 
we have
\begin{equation}
	\left \{
	\begin{array}{l}
	\displaystyle E(\Phi_{\infty}) \geq E(U_{\infty})\\
	\displaystyle \\
	\displaystyle E(U_j) \geq E(\Phi_{k_{mj}}),	
	\end{array}
  \right .
\end{equation}
which leads to
\begin{equation}
	\lim_{j \rightarrow \infty} \inf E(\Phi_{k_{mj}}) \geq E(\Phi_{\infty}) \geq
	E(U_{\infty})=\lim_{j \rightarrow \infty}E(U_j)\geq
	\lim_{j \rightarrow \infty} \inf E(\Phi_{k_{mj}}).
\end{equation}
The above relation implies (\ref{firstresult}),
\begin{equation}
	\lim_{j \rightarrow \infty}E(\Phi_{k_{mj}})=E(\Phi_{\infty})=\min_{U \in V_{\infty}\cap \mathcal{O}^N}E(U).
\end{equation}

For the second step, we want to prove all AFE approximations for the
Kohn--Sham gradient flow model converge to Kohn--Sham ground-state 
solutions, i.e.,
\begin{equation}\label{afecon}
	\lim_{k \rightarrow \infty}E_k=\min_{U \in \mathcal{O}^N}E(U).
\end{equation}
Before approaching the above equality,
we first prove the eigenpair $(\Lambda_{\infty},\Phi_{\infty})$ 
solves the weak form of (\ref{kse}).

 $\forall ~\Gamma \in \mathcal{H}$, there holds
 \begin{eqnarray}\label{rela3}
 	\langle H_{\Phi_{\infty}}\Phi_{\infty}-H_{\Phi_{\infty}}\Phi_{\infty},\Gamma \rangle
 	&=& \langle H_{\Phi_{\infty}}\Phi_{\infty}-H_{\Phi_k}\Phi_k,\Gamma \rangle
 	-\langle R_k(\Phi_k), \Gamma\rangle+\langle R_k(\Phi_k), \Gamma\rangle \\
 	&=& \langle H_{\Phi_{\infty}}\Phi_{\infty}- H_{\Phi_k}\Phi_k, \Gamma\rangle
 	-\langle \Lambda_{\infty}\Phi_{\infty}-\Lambda_k \Phi_k, \Gamma \rangle
 	+ \langle R_k(\Phi_k), \Gamma\rangle.   \nonumber
 \end{eqnarray}
For the first term, using the continuity property, we obtain
\begin{eqnarray}
	\langle H_{\Phi_{\infty}}\Phi_{\infty}- H_{\Phi_k}\Phi_k, \Gamma\rangle
	&\lesssim & \lVert H_{\Phi_{\infty}}\Phi_{\infty}- H_{\Phi_k}\Phi_k\rVert_{1,\Omega} \cdot \lVert \Gamma\rVert_{1,\Omega} \nonumber\\
	& \lesssim &\Lambda_k \lVert \Phi_{\infty}-\Phi_k\rVert_{1,\Omega}\cdot \lVert \Gamma\rVert_{1,\Omega} \nonumber \\
	& \lesssim & \lVert \Phi_{\infty}-\Phi_k\rVert_{1,\Omega}\cdot \lVert \Gamma\rVert_{1,\Omega},
\end{eqnarray}
Combine together with (\ref{rela3}), we arrive at
\begin{equation}
	\langle H_{\Phi_{\infty}}\Phi_{\infty}-H_{\Phi_{\infty}}\Phi_{\infty},\Gamma \rangle  \lesssim \lVert \Phi_{\infty}-\Phi_k\rVert_{1,\Omega}\cdot
	\lVert \Gamma\rVert_{1,\Omega}+\langle R_k(\Phi_k).\Gamma \rangle.
\end{equation}
Since $\Phi_k \rightarrow \Phi_{\infty},$ when $k \rightarrow \infty$, 
and $\lim\limits_{k \rightarrow \infty}\langle R_k(\Phi_k),\Gamma \rangle=0$
by employing the Lemma \ref{le:4}. Thus, the above inequality turns to be zero,
i.e.,
\begin{equation}
	\langle H_{\Phi_{\infty}}\Phi_{\infty},\Gamma\rangle=
	\langle \Lambda_{\infty}\Phi_{\infty},\Gamma\rangle,
\end{equation}
which means this limiting eigenpair solves the weak form of (\ref{kse}).
On account of the fact that the gradient flow model is 
derived to obtain the ground state of the Kohn--Sham system, i.e., the 
wavefunction $\Phi_{\infty}$ solves the variational form of the
Kohn--Sham gradient flow model as well. Based on the above 
consideration, we now turn to demonstrate $\Phi_{\infty} \in \Theta$.

From the construction of the finite element subspaces $S_0^{h_k}(\Omega)$,
$k=0,\cdots,\infty$, we can easily get the following result:
$\forall ~\Phi \in \mathcal{H}$, there exists a sequence
$\{U_{h_k}\}_{k=0}^{\infty}$ such that
\begin{equation}
	\lim_{k \rightarrow \infty}\inf \lVert U_{h_k}-\Phi
	\rVert_{1,\Omega}=0,~~\forall~ \Phi \in \mathcal{H},
\end{equation}
which is equivalent to
\begin{equation}
	\lim_{h \rightarrow 0}\inf_{U \in V_h} \lVert U-\Phi
	\rVert_{1,\Omega}=0, ~~\forall~ \Phi \in \mathcal{H}.
\end{equation}
Choosing an initial mesh $\mathcal{T}_0$, such that
\begin{equation}
	E_0\equiv \min_{\Phi_{h_0}\in V_{h_0}\cap \mathcal{O}^N} E(\Phi_{h_0})<
	\min_{(\Lambda,U)\in \mathcal{W}\backslash \Theta}E(U),
\end{equation}
where $\mathcal{W}$ is the set of solutions of the weak form of the 
Kohn--Sham gradient flow system.

Due to $E_k \leq E_0$, it yields
\begin{equation}
	\lim_{k\rightarrow \infty} E(\Phi_{h_k})< \min_{(\Lambda,U)\in \mathcal{W}\backslash \Theta}E(U),
\end{equation}
that is,
\begin{equation}
	E(\Phi_{\infty})= \min_{\Phi\in V_{\infty}\cap \mathcal{O}^N} E(\Phi) < \min_{(\Lambda,U)\in \mathcal{W}\backslash \Theta}E(U).
\end{equation}
Suppose $\tilde{U}$ is a minimizer of the energy functional
in $\mathcal{O}^N$, then there holds
\begin{equation}
	E(\tilde{U})=\min_{U \in \mathcal{O}^N}E(U)\leq E(\Phi_{\infty})<
	\min_{(\Lambda,U)\in \mathcal{W}\backslash \Theta}E(U).
\end{equation}
Furthermore, using the fact that
\begin{equation}
	\lim_{h \rightarrow 0}\inf_{U \in V_h}\lVert U-\bar{U}
	\rVert_{1,\Omega}=0,~~\forall ~\bar{U} \in \mathcal{H}.
\end{equation}
That is to say, for $\tilde{U} \in \mathcal{O}^N$, we can find
$U_{\infty} \in V_{\infty}$ such that
\begin{equation}
	\lVert U_{\infty}-\tilde{U}\rVert_{1,\Omega}=0.
\end{equation}
It can easily be checked that
\begin{equation}
	E(U_{\infty})=E(\tilde{U}).
\end{equation}
As $\Phi_{\infty}$ is the minimizer of the energy functional in
$V_{\infty}\cap \mathcal{O}^N$, it follows that
\begin{equation}
	E(\tilde{U})=\min_{U\in \mathcal{O}^N}E(U)\leq E(\Phi_{\infty})
	\leq E(U_{\infty})=E(\tilde{U})= \min_{U\in \mathcal{O}^N}E(U)
	<\min_{(\Lambda,U)\in \mathcal{W}\backslash \Theta}E(U),
\end{equation}
thus,
\begin{equation}
	E(\Phi_{\infty})=\min_{U\in \mathcal{O}^N}E(U),
\end{equation}
which hints $\Phi_{\infty} \in \Theta$, i.e., 
the equality (\ref{afecon}) holds. This completes the proof.
\end{proof}

So far, we have finished the analysis for spatial and temporal 
convergence, respectively. Now, we have arrived at the main 
conclusion that under some proper conditions,
\begin{equation}
	E(U_k^n) \rightarrow E(U), ~k,n \rightarrow \infty.
\end{equation}
We can guarantee from the initial guess that our algorithm can 
converge to the ground state of a given molecular system from
all-electron calculations.

\section{Numerical experiments}
In this section, focusing on the proposed method for 
the gradient flow model, we verify the stability of the numerical scheme.
The performance of the proposed numerical method will be shown by some 
numerical experiments, which would verify the convergence of our numerical
scheme both spatially and temporally. 

The hardware for the simulations is a Dell 
OptiPlex 7060 workstation with Inter(R) Core(TM) i7--8700 CPU @3.20 GHz 
and 8.00 GB of memory, while the code for the simulations is developed 
based on the iFEM package \cite{chen2009integrated}.

Before introducing the numerical examples, it is noted that 
the ground state for each given atom or molecule is calculated accurately
with our $h$-adaptive finite element method. In our method, the GMRES 
solver is employed for solving the model problem. The time step 
$\Delta t_n$ always be small in the simulation, ranging from 
$1.0 \times 10^{-6}$ to $1.0 \times 10^{-3}$.

It should be mentioned that the adaptive stopping 
criterion strategy is adopted to accelerate the simulations. There is a
phenomenon that the total energy will decrease dramatically only with 
several steps, and then it will take a such long time to correct to the 
lowest energy, which has been verified in \cite{hu2022}. Based on this 
consideration, some standard is needed to judge the decrease rate of 
the total energy, that is,
\begin{equation}\label{stoppcri}
	\frac{E(U^n)-E(U^{n+1})}{\Delta t_n} < \epsilon,
\end{equation}
when satisfies this criterion, we stop the computing 
on the current mesh and call for the $h$-adaptive algorithm to prepare 
the new mesh for the next round computing. Additionally, it should be 
pointed out that the criterion could not be too small.

The following three examples are given to validate our theoretical results.
\begin{example}\label{example1}
  A He atom

  We consider the gradient flow model for a helium atom with orbit $N=1$
  on $\Omega_1 = (-20,20)^3$. We set external potential
  $V_{ext}(x) = -2|x|^{-1}$ and electron density $\rho = 2|u|^2$. The
  position of the nucleus is the origin. The Hartree potential is
  obtained by solving the Poisson equation
  $-\nabla^{2} V_{Har}(x)=4 \pi\rho(x)$ with zero Dirichlet boundary
  conditions. The exchange-correlation energy is obtained by LDA
  \cite{perdew1981self}.

  The energy convergence curve with time and the contour plot of the
  electron density of helium atom are given in Fig.\ref{ex1-norm}.
  Fig.\ref{ex1-norm} (left) shows the energy approximations converge
  monotonically, and the numerical convergence curve is showed to
  agree with the theoretical result, which is very close to the
  referenced value -2.887688 au from \mbox{CCCBDB} \cite{cccbdb}.

  \begin{figure}[hpt]
    \begin{center}
      \includegraphics*[width=0.5\textwidth]{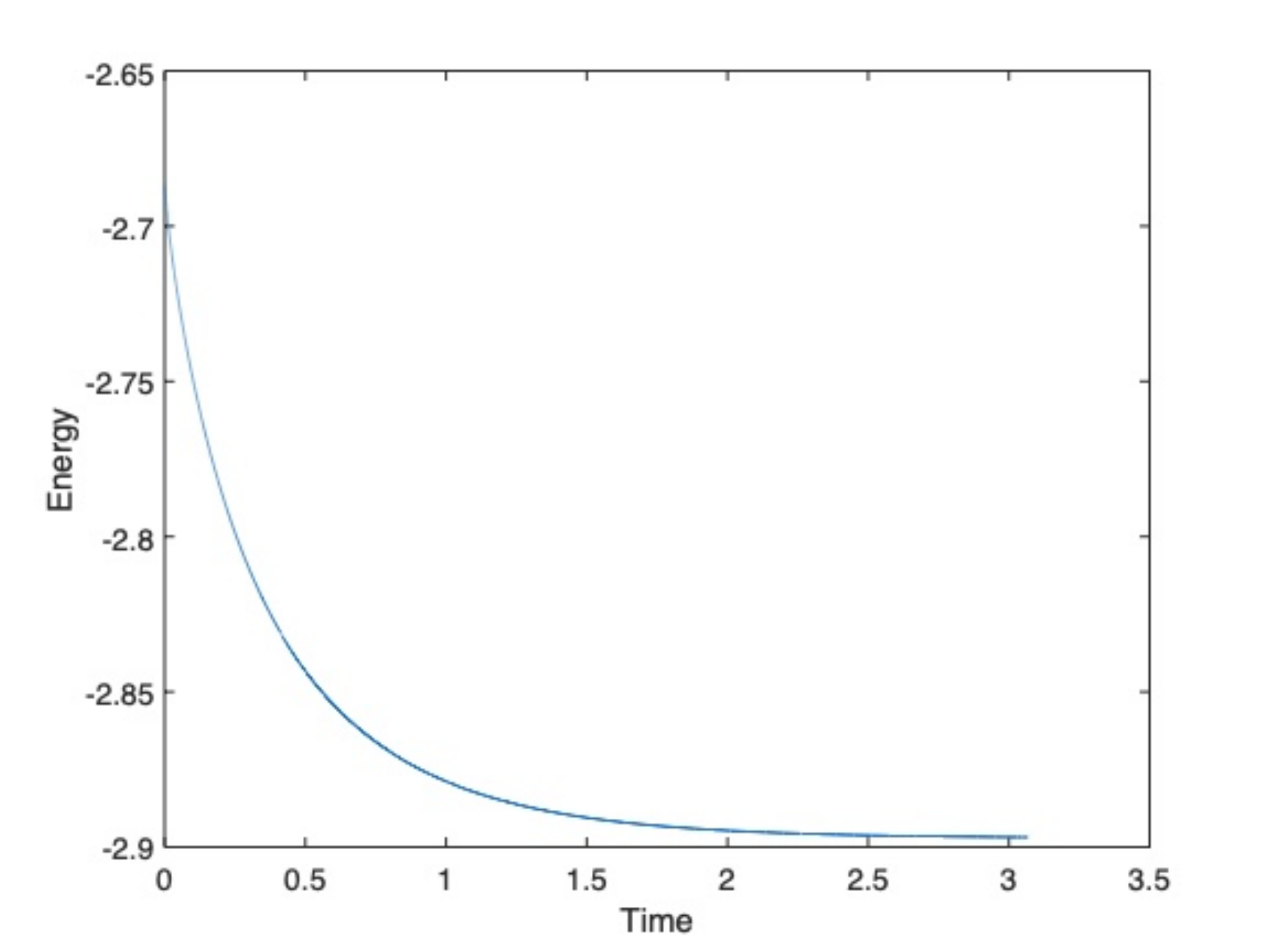}
      \includegraphics*[width=0.35\textwidth]{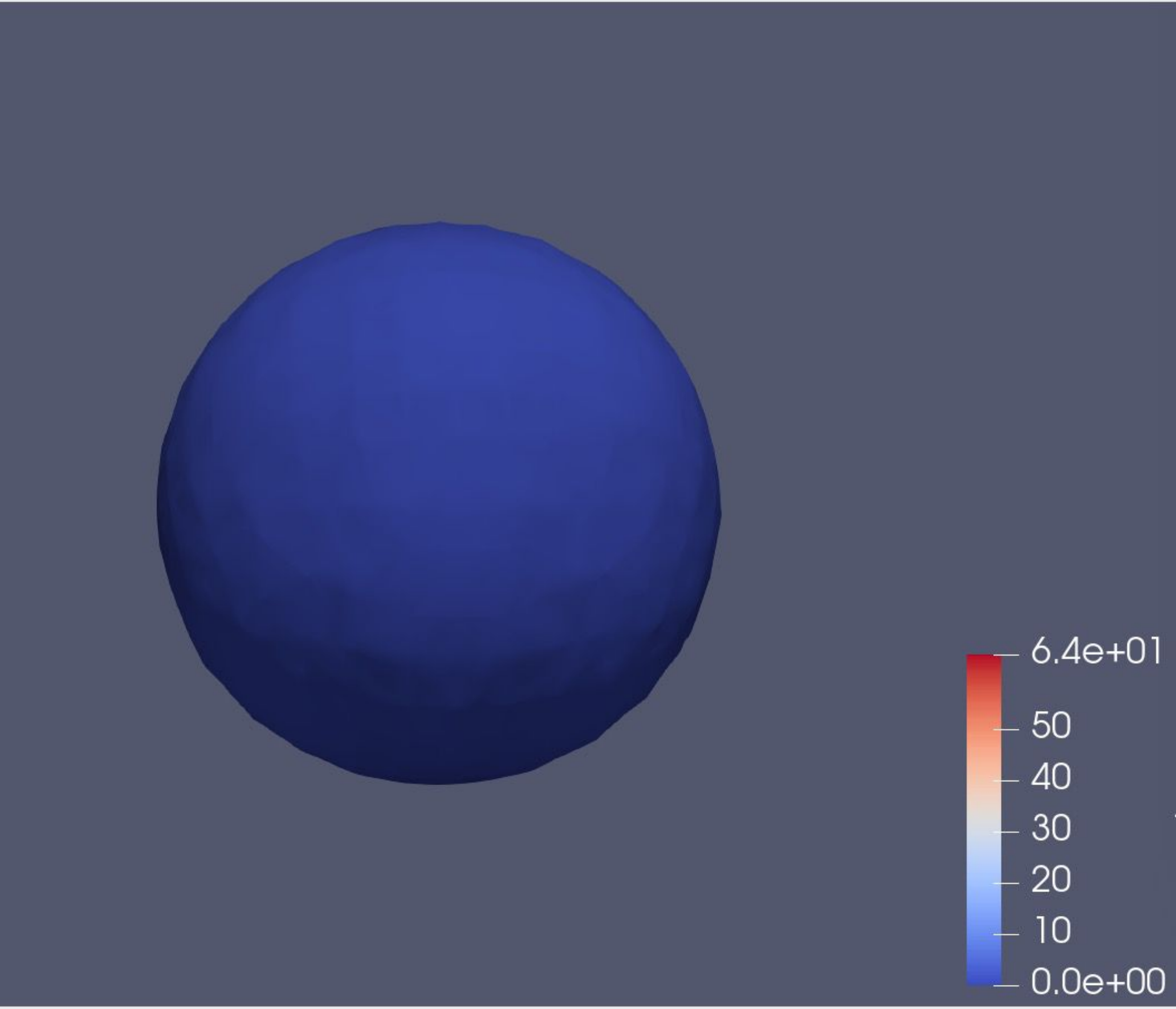}
    \end{center}
    \caption{Left: Convergence of the algorithm with time; Right: The
      contour plot of the electron density of helium
      atom.  \label{ex1-norm}}
  \end{figure}

\end{example}

\begin{example}\label{example2}
  A $H_2$ molecule
  
  We consider the gradient flow model for a $H_2$ molecule with orbits
  number N = 2 on $\Omega_2 = (-20,20)^3 $. In this example, we set
  external potential $V_{ext}(x) = -|x-r_1|^{-1}-|x-r_2|^{-1}$ and
  electron density $\rho = 2(|u_1|^2+|u_2|^2)$.The positions of nuclei
  are as follows: one hydrogen atom $r_1=(-0.7414,0,0)$ and another atom
  $r_2=(0.7414,0,0)$, respectively. The Hartree potential is obtained
  by solving the same Poisson equation in which its boundary
  conditions are given by multipole expansion
  \cite{bao2013numerical}. The evaluation of exchange-correlation
  energy is the same as that in Example \ref{example1}. Similar to the
  helium atom, the energy convergence curve with time and the contour
  plot of the electron density of $H_2$ molecule are given in
  Fig.\ref{ex2}, which converge well and agree with our theoretical
  analysis.
  \begin{figure}[hpt]
    \begin{center}
      \includegraphics[width=0.5\textwidth]{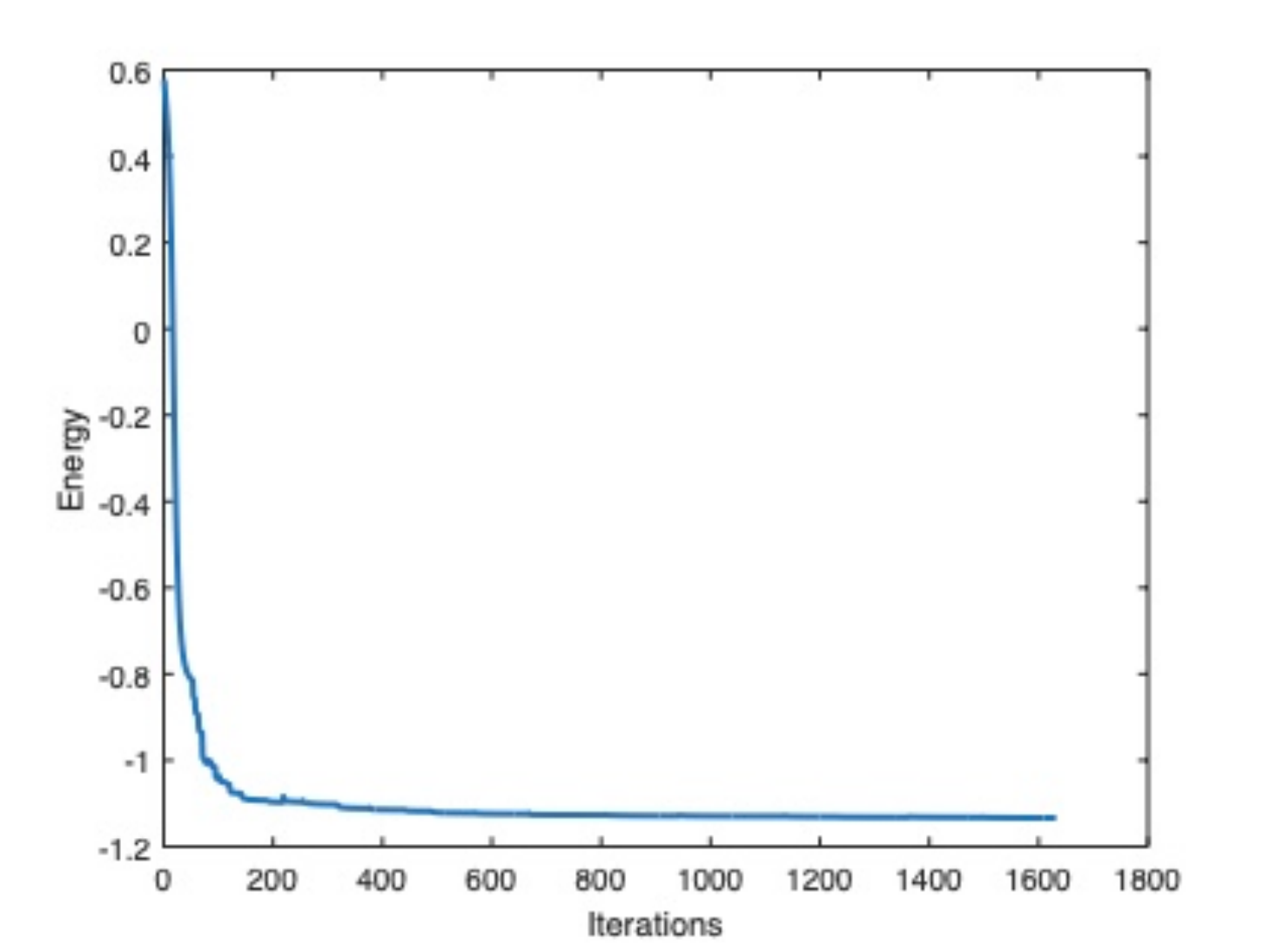}
      \includegraphics[width=0.4\textwidth]{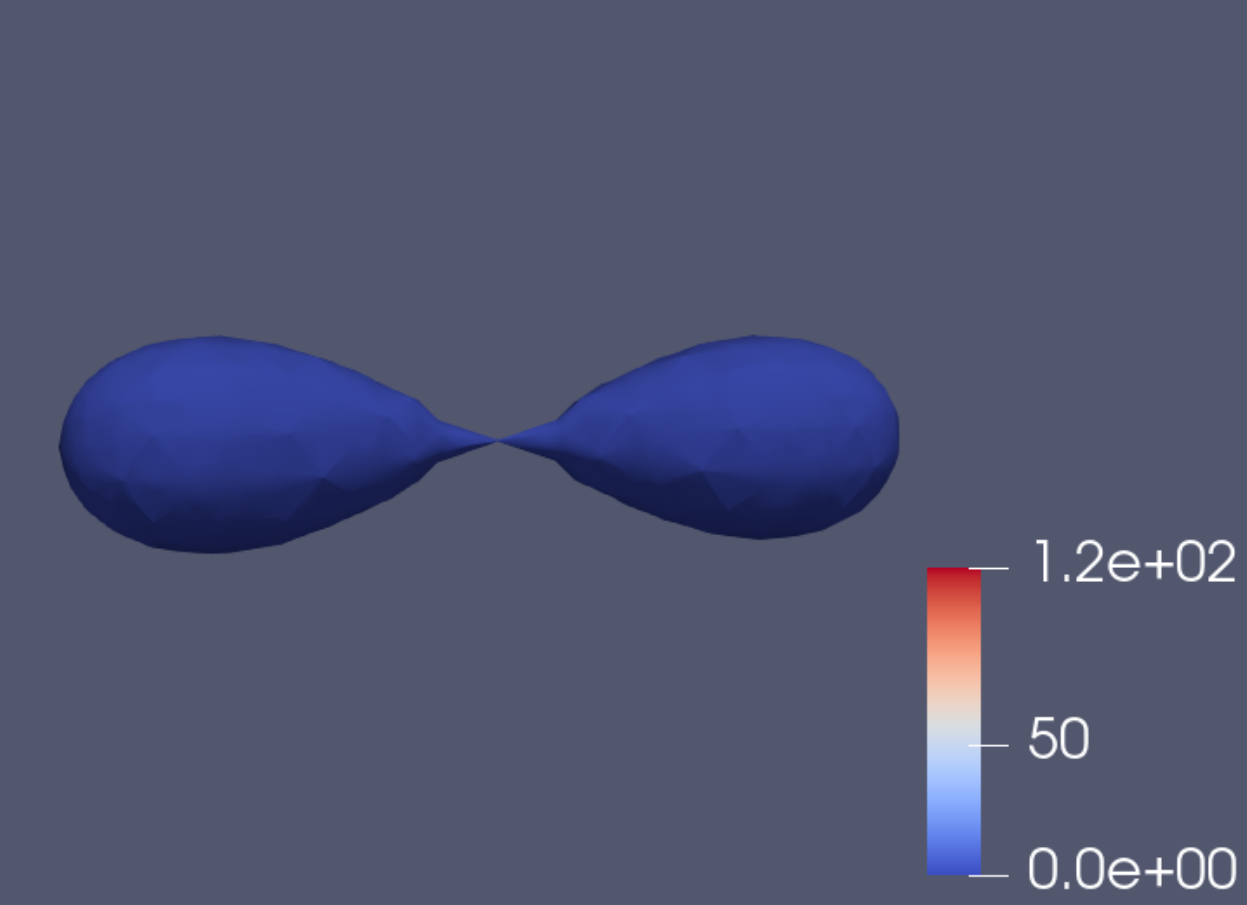}
    \end{center}    
    \caption{Left: The contour plot of the electron density of $H_2$
      molecule; Right: Convergence of the algorithm with mesh
      refining.\label{ex2}}
  \end{figure}
\end{example}

\begin{example}
A LiH molecule

We consider the gradient flow model for a lithium hydride (LiH)
molecule with 2 electron orbitals. The positions of nuclei are as
follows: lithium atom $R_1=(-1.0075,0,0)$ and hydrogen atom
$R_2=(2.0075,0,0)$, respectively. In this example, we set external
potential
$V_{ext}(x)=-3\lvert x-R_1\rvert^{-1}-\lvert x-R_2\rvert^{-1}$ and
electron density
$\rho(x)=2(\lvert u_1(x)\rvert^{2}+\lvert u_2(x)\rvert^{2})$.  The
Hartree potential is obtained by solving the same Poisson equation in
which its boundary conditions are given by multipole expansion
\cite{bao2013numerical}. The evaluation of exchange-correlation
potential is the same as in Example \ref{example1}. We set the
computational domain $\Omega_2=(-10,10)^3$.

Fig.\ref{ex3} shows the results from the ground state simulation for
LiH molecule. The contour of the electron density of $LiH$ molecule is
shown in Fig.\ref{ex3} (top left), and it shows the global
convergence of the total energy of LiH with the adaption refinement of
the mesh as well (top right).  Additionally, the mesh grids around the
molecule with different number of iteration steps increasing from left
to right are also shown in Fig.\ref{ex3} (bottom), from which we can
observe that the regions with large variation of density are resolved
well by mesh grids.

\begin{figure}[hpt]
  \begin{center}
    \includegraphics[width=0.4\textwidth]{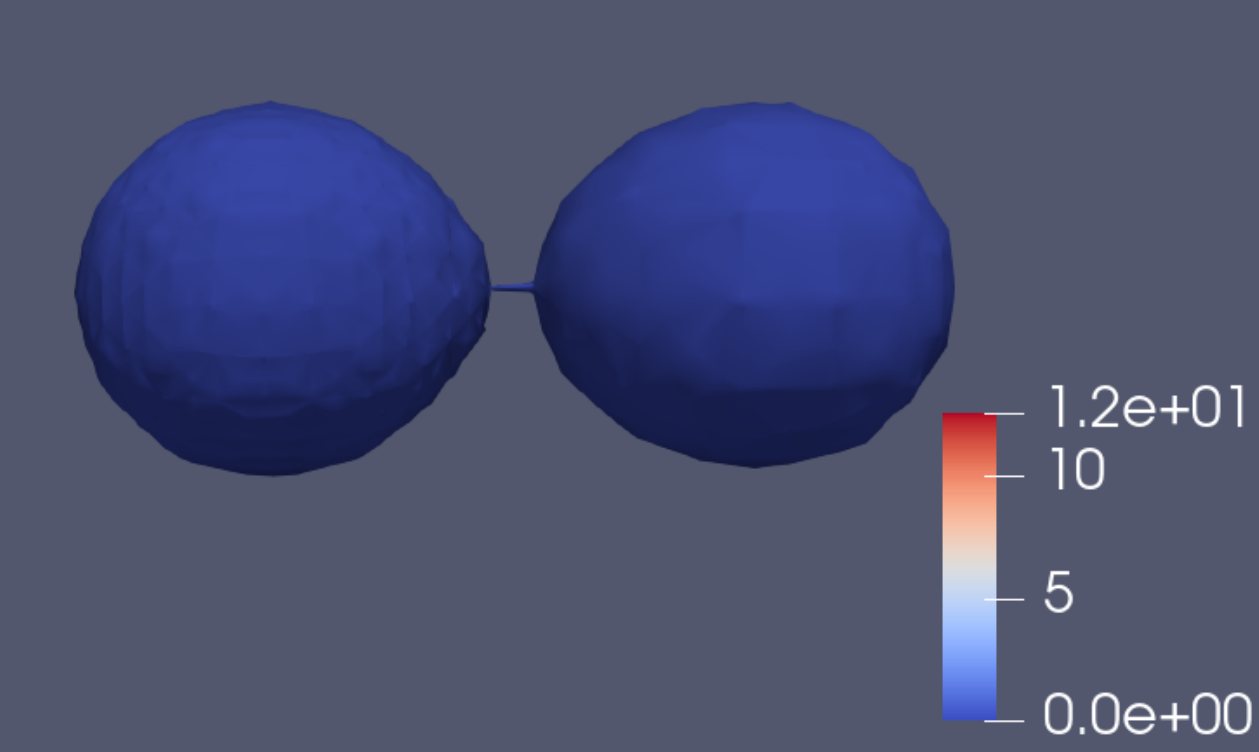}
    \includegraphics[width=0.4\textwidth]{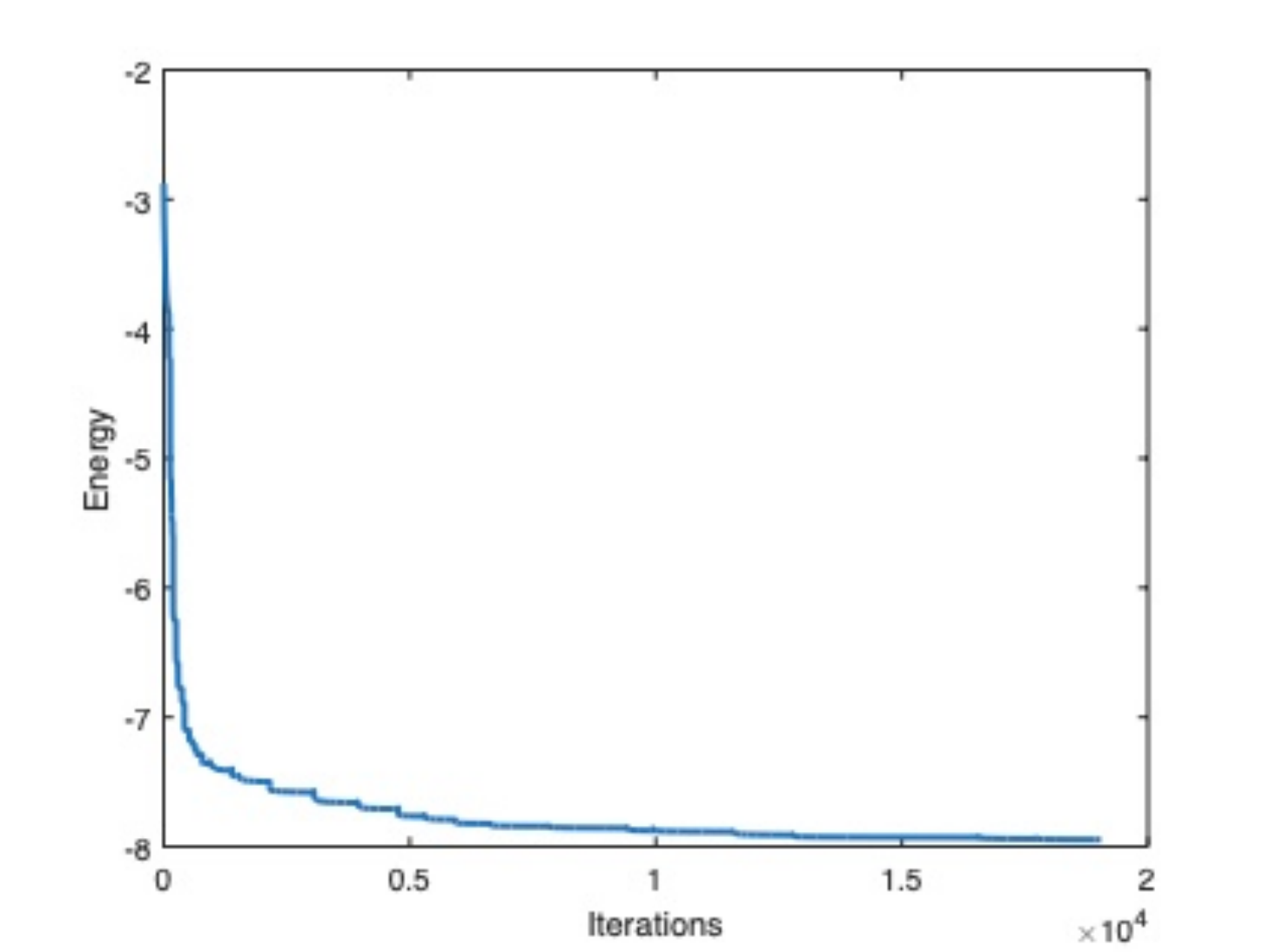}\\
    \includegraphics[width=0.32\textwidth]{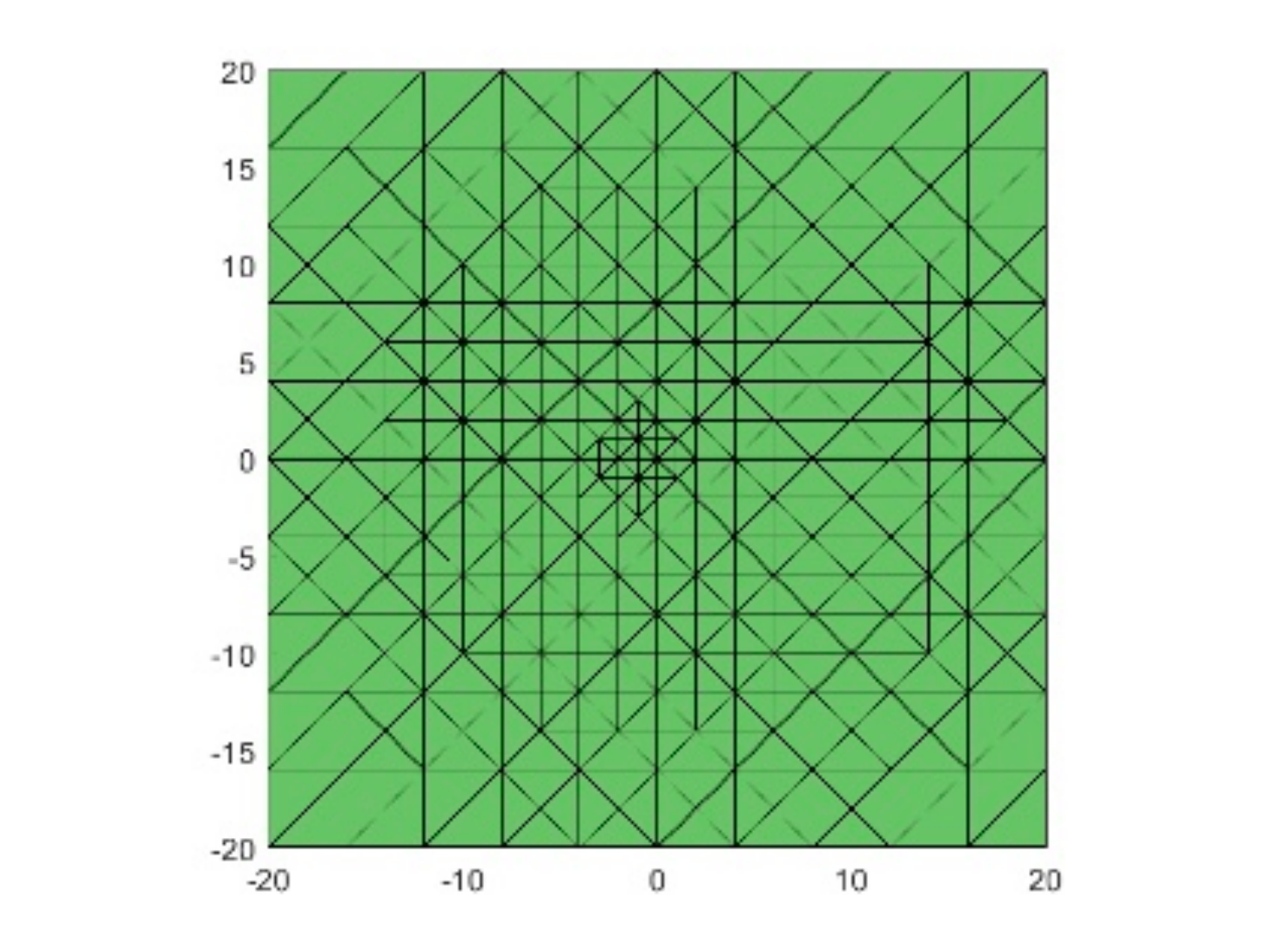}
    \includegraphics[width=0.32\textwidth]{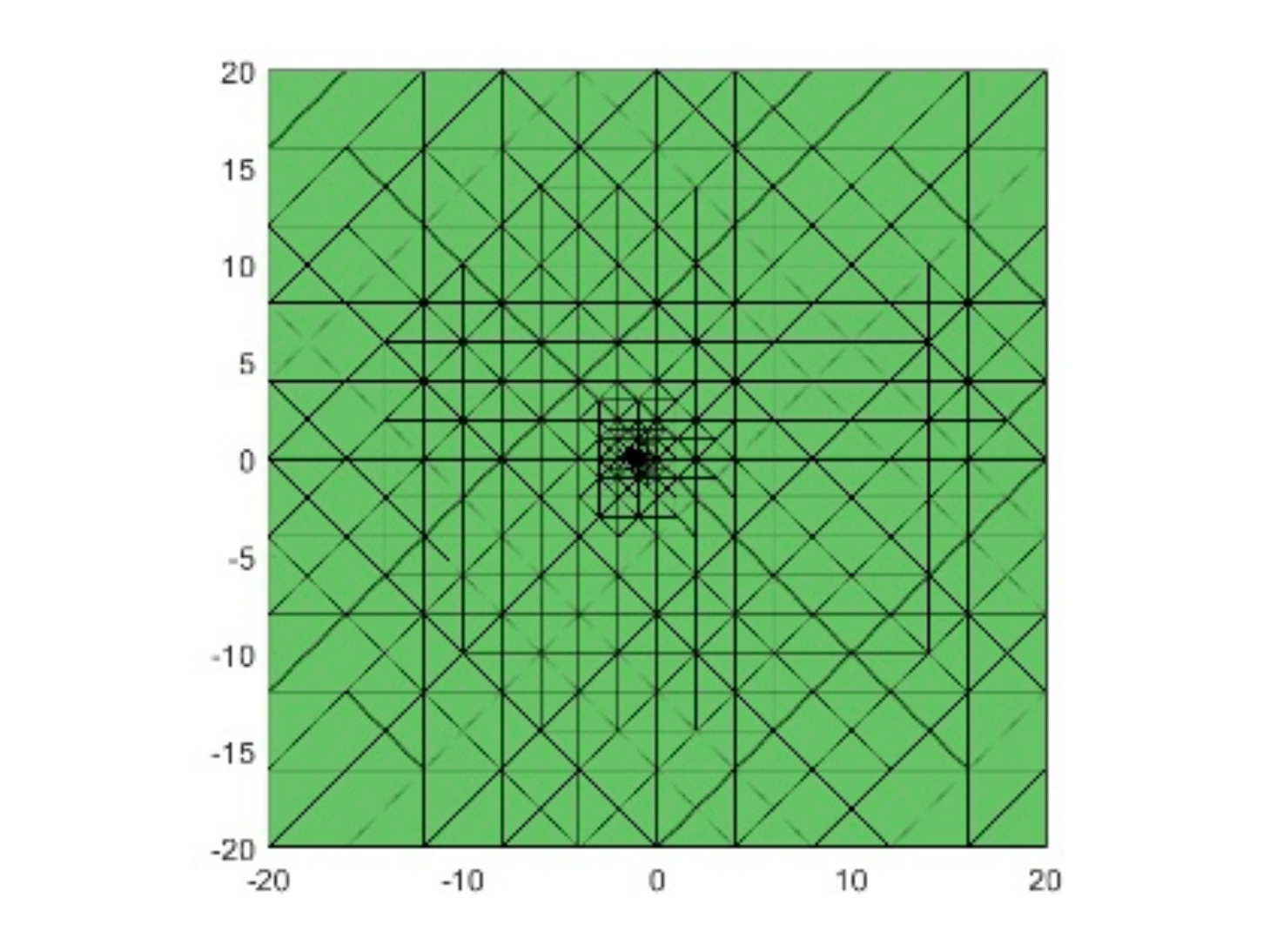}
    \includegraphics[width=0.32\textwidth]{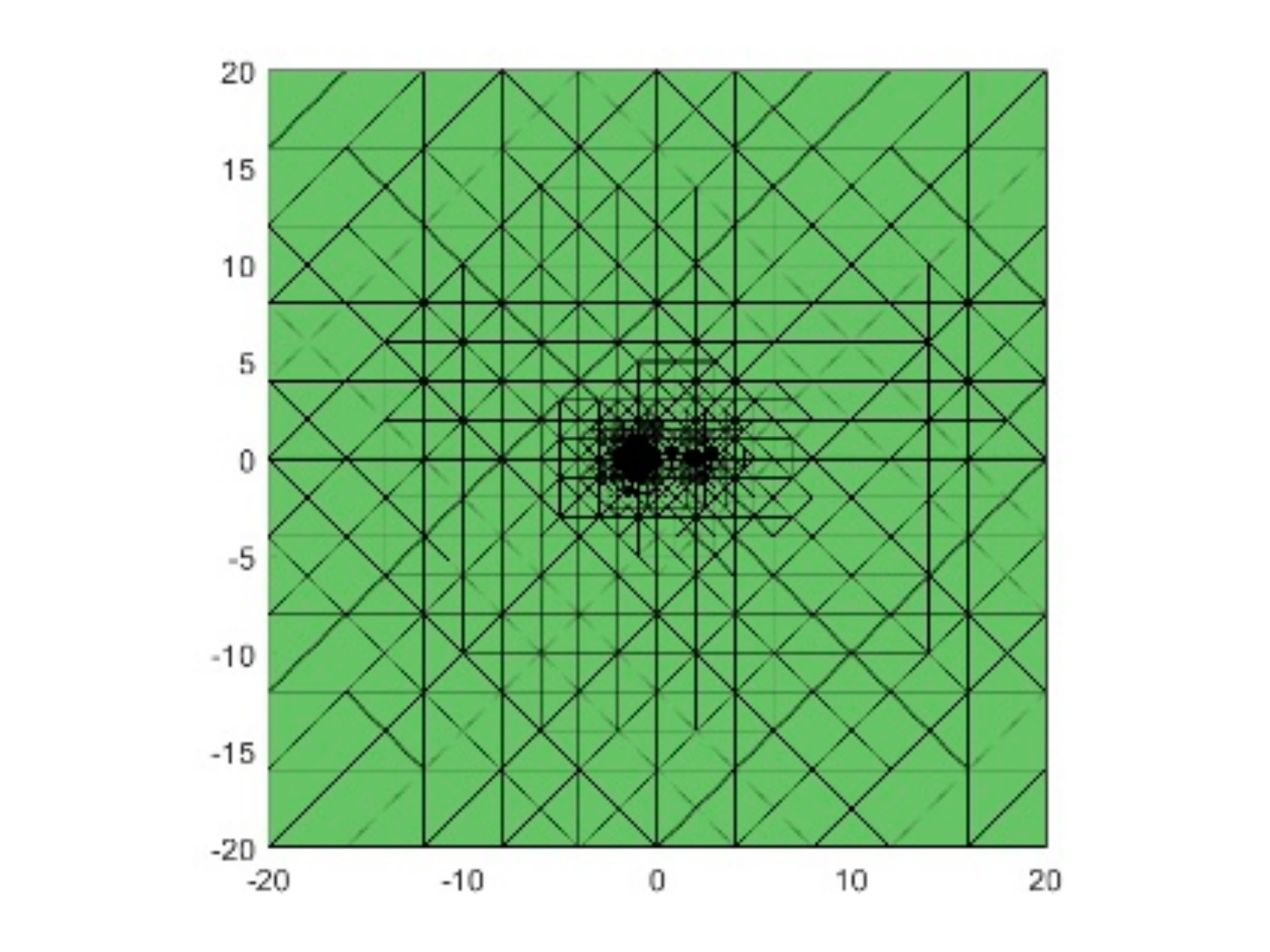}
  \end{center}
  \caption{The results for LiH. Top left: the contour of the electron
    density. Top right: the global convergence history of LiH. Bottom:
    the mesh grids around the LiH molecule, of which the number of
    iteration steps increase from left to right.\label{ex3}}
\end{figure}

\end{example}

\section{Conclusion}
This paper is devoted to show the convergence of 
the proposed numerical scheme for the Kohn--Sham gradient flow problem 
both spatially and temporally in a unified framework. Spatially, the
convergence of the AFE approximation of the gradient flow 
model has been demonstrated by means of the boundedness of the total 
energy in the all-electron calculation.
Temporally, motivated by \cite{dai2020gradient}, a linearized numerical
scheme for the gradient flow model is employed in this paper, which is
orthonormality preserving and has been proved to be convergent as well.
Concerned with the efficiency issue, an $h$-adaptive mesh method is
developed and a recovery type a \emph{posteriori} error estimation
technique is employed in the $h$-adaption scheme. Numerical examples 
successfully show the convergence of the proposed numerical scheme and 
confirm our theoretical results very well.


\section*{Acknowledgments}
We would like to thank the anonymous reviewers for their helpful
comments. The third author was partially funded by Hunan National
Applied Mathematics Center of Hunan Provincial Science and Technology
Department (2020ZYT003), and Excellent youth project of Hunan
Education Department (19B543).  The fourth author was partially
supported by National Natural Science Foundation of China (Grant
Nos. 11922120 and 11871489), FDCT of Macao S.A.R. (Grant
No. 0082/2020/A2), MYRG of University of Macau (MYRG2020-00265-FST)
and Guangdong-Hong Kong-Macao Joint Laboratory for Data-Driven Fluid
Mechanics and Engineering Applications (2020B1212030001).

\bibliographystyle{IEEEtran}

\end{document}